\documentclass{article}     

\usepackage{amssymb,amsmath,mathrsfs, amsfonts, graphicx,theorem,dsfont,yfonts, mathabx, version, enumitem}
\usepackage{pstricks}
\usepackage{pst-all}

\usepackage[normalem]{ulem}

\usepackage[english]{babel}
\usepackage{cancel}

\usepackage[left,modulo]{lineno}

\textwidth=158mm
\textheight=240 mm

\voffset=-20  mm
\hoffset=-17 mm

\def\mN{{\mathbb N}}

\def\cE{\mathscr E}

\def\cK{\mathcal K}

\def\cL{\mathcal L}

\def\a{\alpha}

\def\f{\varphi}

\def\cl{\texttt{cl}}

\newtheorem{theorem}{Theorem}
\newtheorem{lemma}[theorem]{Lemma}
\newtheorem{definition}[theorem]{Definition}
\newtheorem{prop}[theorem]{Proposition}

\newtheorem{remark}[theorem]{Remark}

\newtheorem{proof}[theorem]{Proof}

\newtheorem{cor}[theorem]{Corollary}
\newtheorem{conj}{Conjecture}

\def\og{\leavevmode\raise.3ex\hbox{$\scriptscriptstyle\langle\!\langle$~}}
\def\fg{\leavevmode\raise.3ex\hbox{~$\!\scriptscriptstyle\,\rangle\!\rangle$}}

\newsavebox{\fmbox}

\def\rd#1{\:\searrow \!\!\stackrel{#1}{}\:}
\def\ep#1{\:\stackrel {#1}{}\!\!\nearrow\:}

\def\cubion{\mathfrak Q}

\def\smallcubion{\mathfrak Q}

\def\parasol{\mathfrak P}

\def\dom{\vdash}

\makeatletter
\def\timenow{\@tempcnta\time
  \@tempcntb\@tempcnta
  \divide\@tempcntb60
  \ifnum10>\@tempcntb0\fi\number\@tempcntb
  \multiply\@tempcntb60
  \advance\@tempcnta-\@tempcntb
  :\ifnum10>\@tempcnta0\fi\number\@tempcnta}
\makeatother

\usepackage{calc}
\newcounter{hours}\newcounter{minutes}


\newcommand{\N}{\mathbb{N}}\vspace{1 mm}


\begin{document}
{\large
\bigskip
\vspace{7 mm}

\centerline{\large \bf A hierarchy of dismantlings in graphs}

\vspace{5 mm}

\centerline{Etienne Fieux$^{\,a,}$\footnote{fieux@math.univ-toulouse.fr}, 
Bertrand Jouve$^{\,b,}$\footnote{bertrand.jouve@cnrs.fr}}
\vspace{3 mm}
}
\centerline{{\small a. IMT (UMR5219), Universit\'e Toulouse 3, CNRS, 
118 route de Narbonne, 31062 Toulouse cedex 9, France}}
\centerline{{\small b. LISST (UMR5193), CNRS, Universit\'e Toulouse 2, 
5 all\'ees Antonio Machado, 31058 Toulouse cedex 1, France}}

\vspace{6 mm}

\begin{abstract}
Given a finite undirected graph $X$, 
a vertex is $0$-dismantlable 
if its open neighbourhood is a cone and $X$ 
is $0$-dismantlable if 
it is reducible to a single vertex by successive 
deletions  of $0$-dismantlable
vertices. By an iterative process, a vertex 
is $(k+1)$-dismantlable 
if its open neighbourhood is $k$-dismantlable
and a graph is $k$-dismantlable if 
it is reducible to a single vertex by successive deletions
 of $k$-dismantlable vertices.
 We introduce a graph family, the cubion graphs,
in order to prove that $k$-dismantlabilities give a strict
hierarchy in the class of graphs whose clique 
complex is non-evasive.
We point out how these higher dismantlabilities are related to the derivability 
of graphs defined by Mazurkievicz and we get a new
characterization of the class of closed graphs he defined.
 By generalising the 
notion of vertex transitivity, we consider the issue of higher dismantlabilities 
in link with the evasiveness conjecture. 
\end{abstract}

\vspace{4 mm}
\noindent \textbf{Keywords: dismantlability, flag complexes, 
collapses, evasiveness, graph derivability.}\\

\section{Introduction}

The transition from a graph to its clique complex is one 
of the many ways for associating 
a simplicial complex to a graph.  
Through the notion of dismantlability, 
it  is possible to develop homotopic notions 
adapted to the framework of finite graphs. 
In this paper  we will 
only discuss the dismantlability of vertices. 
The principle of dismantlability in graphs
is to set a rule that indicates the possibility 
of adding or removing vertices
and two graphs are in the same 
homotopy class if one can switch from one 
to the other by a succession of moves (a move being either 
a vertex addition, or a vertex deletion). 
 
The 0-dismantlability is well known: 
a vertex $x$ is 0-dismantlable 
if its open neighbourhood is a cone. 
This means there is a vertex $y$ adjacent to $x$ 
such that any neighbour 
of $x$ is also a neighbour of $y$ (we say that $x$ 
is dominated by $y$) 
and we know that a graph is 0-dismantlable if, 
and only if, it is cop-win
\cite{quilliot, now-win}. 
From a simplicial point of view, the 0-dismantlability 
of a graph is 
equivalent to the strong-collapsibility of its  
clique complex \cite{fl}. 
\textsl{Strong collapsibility} is  introduced 
by Barmak and Minian
\cite{barmin12} who proved that the strong homotopy type 
of a simplicial complex can be described in terms 
of contiguity classes. Assuming that a vertex of a graph 
is 1-dismantlable if its neighbourhood is 0-dismantlable, 
we obtain 1-dismantlability for graphs and it is 
established in \cite{bfj} that 
two graphs $X$ and $Y$ have the same  1-homotopy type
if, and only if, their clique complexes 
$\cl(X)$ and $\cl(Y)$ have the same simple homotopy type.

The $k$-dismantlabilities for $k\geq 2$ reproduce 
this recursive scheme 
to define increasingly large classes of graphs 
to which this paper is dedicated.
In section 2, the main definitions concerning 
graphs and simplicial 
complexes are recalled with the fact 
(Proposition \ref{prop_link})
that the $k$-dismantlability 
of a graph X is equivalent to the $k$-collapsibility of its 
clique complex $\cl(X)$.
While the notions of 0-homotopy 
and 1-homotopy are very different, it should be 
noted that the contribution of 
the higher dismantlabilities
is not so much at the homotopy  level (Proposition 
\ref{prop-Xk-egale-X1})  as at the level of 
dismantlability classes $D_k$, where $D_k$ is the 
class of all $k$-dismantlable graphs.
Section 3 is devoted to the presentation of a family 
of graphs $(\cubion_n)_{n\in \mN}$
(called \textsl{cubion graphs}) which
shows that $(D_k)_{k \in \mathbb N}$
is an increasing sequence of strict inclusions
(Proposition \ref{echelle}):
 $$\forall\: n \geq 2, ~~\cubion_n 
 \in D_{n-1}\setminus D_{n-2}\:.$$
 We also prove that the existence of a 
$(k+1)$-dismantlable and non  $k$-dismantlable vertex implies the 
presence of a clique of cardinal at least $k+3$
(Proposition \ref{prop-existence-d-une-clique}).
In Section 4, the introduction of the \textsl{parasol} 
graph shows the very importance 
of the order in which vertex dismantlings are 
operated  as soon as one leaves 
the class of 0-dismantlable graphs (Proposition \ref{prop-parasol}). 
Setting $D_{\infty}=\bigcup_{k\geq0}D_k$ and considering 
the 1-skeletons of triangulations of the 
Dunce Hat and the Bing's House, 
we explore the question of graphs 
not in $D_{\infty}$ but for which it is sufficient to add 
some $0$-dismantlable vertices to get into $D_{\infty}$. 
We note that $D_{\infty}$ is the smallest fixed point of 
the derivability operator $\bigtriangleup$ of 
Mazurkiewicz \cite{mazur}. 
The set of  $k$-collapsible simplicial complexes with varying values of $k$ in $\N$ 
is the set of non-evasive complexes \cite{barmin12}. 
Therefore,  the elements of $D_{\infty}$
  will be called \textsl{non-evasive}. 
So, the question of whether a $k$-dismantlable and vertex-transitive graph 
is necessarily a complete graph is a particular case of 
the evasiveness conjecture for simplicial complexes, according to which 
\textsl{every vertex homogeneous and non-evasive simplicial complex is a simplex}. 
In the final section, we introduce the notion of 
\textsl{i-complete-transitive graph} to establish a particular 
case for which the conjecture is valid. 

The study of simplicial complexes appears today in a very wide spectrum of research and 
applications \cite{carlsson,ghrist,SCL19}. Very often, 
these complexes are constructed from finite data to obtain 
information on their structure, for instance by the calculus of Betti 
numbers or homotopy groups.
It should be mentioned that the notion of clique complexes 
(also called \textsl{flag complexes} \cite{kozlov})
seems rather general from the homotopic point of view since the barycentric
subdivision of any complex is a flag complex (and the 1-homotopy type of a complex
and of its barycentric subdivision are the same \cite{bfj}).
From this point of view, the notion of higher dismantlabilities is a contribution 
to the study of homotopic invariants for simplicial complexes 
associated to finite data. 
From another point of view, the notion of  higher dismantlabilities
extends the list of graph families built by adding 
or removing nodes with the condition that the neighbourhoods 
of these nodes check certain properties. 
The first example is probably the family of finite chordal 
graphs which is exactly the family of graphs constructed 
by adding simplicial vertices (i.e. whose neighbourhoods are complete graphs) 
from the point. They can also be characterized as graphs 
that can be reduced to a point by a succession of simplicial 
vertex deletions \cite{dirac}. Bridged graphs \cite{anstee} 
and cop-win graphs \cite{quilliot, now-win} are two other 
examples of graph families that can be iteratively constructed 
 respecting a condition on the neighbourhood 
of the node added at each step.
From the perspective of Topological Data Analysis (TDA), 
it is worthwhile to identify to what extent 
a topological structure depends on local constraints.  In a complex
network for instance, the global topological structure can sometimes 
be highly explained by local interaction configurations. When 
they verify certain properties, these local constraints generate 
a global structure that deviates from classical null models and 
can thus explain particular global phenomena. Understanding these
multi-scale links between local and global structures is now 
becoming a key element in the modelling of complex networks. 
Perhaps the best known model is Barabasi's preferential attachment
\cite{barabasi} where the attachment of a new node to the network 
is done preferentially by the nodes of higher degrees. 
Other examples are hierarchical models obtained 
for example by a local attachment of each node to a subset 
of nodes of a maximal clique \cite{ravasz}. 
 These local-global concerns are in line with older issues, 
but still up-to-date, raised in the context of 
local computation \cite{rosenstiehl, godard, litovsky}. 
So, from an application point 
of view, the notion of higher dismantlablities
could enrich the range of tools available  
in all these fields.

\section{Notations and first definitions}

\subsection{Graphs}\label{section-graphs}

In the following, $X=(V(X),E(X))$ is a finite undirected graph,
without multiple edges or loops. The cardinal 
$\vert V(X) \vert$ is equal to the number of vertices of $X$, at least equal to $1$. We denote by $\N$ (resp. $\N^{\star}$) the set of integers $\{0,1, 2,\cdots\}$ (resp. $\{1, 2,\cdots\}$).
 
 We write $x\sim y$, or sometimes just $xy$, for $\{x,y\} \in E(X)$ 
and $x \in X$ to indicate that $x \in V(X)$.
The closed neighbourhood of  $x$ is  $N_X[x]=\{ y \in X\:,\:x \sim y\} \cup \{x\}$
and $N_X(x) =N_X[x]\setminus\{x\}$ is its open neighbourhood. 
When no confusion is possible, $N_X[x]$ will 
also denote the subgraph induced by $N_X[x]$ in $X$. 
Let $S=\{x_1 ,\cdots ,x_n\}$ be a subset of $V(X)$, we denote by $X[S]$ or 
$X[x_1 ,\cdots ,x_n]$ the subgraph induced by $S$ in $X$. The particular 
case where $S=V(X) \setminus \{x\}$ will be denoted by $X-x$. In the same way, the
notation $X+y$ means that we have added a new vertex $y$ to the graph $X$ 
and the context must make clear the neighbourhood of $y$ in $X+y$. A \emph{clique} 
of a graph $X$ is a complete subgraph of $X$. 
A \emph{maximal clique} $K$ of $X$ is a clique 
so that there is no vertex in $V(X)\setminus V(K)$ 
adjacent to each vertex of $K$. For $n\geq 1$, 
the complete graph (resp. cycle) with $n$ vertices is denoted 
by $K_n$ (resp. $C_n$). The graph $K_1$ with one vertex  
will be called \emph{point} and sometimes noted $pt$. 
The complement $\overline{X}$ of a graph $X$ has the 
same vertices as $X$ and two distinct vertices of $\overline{X}$ are adjacent 
if and only if they are not adjacent in $X$. 

The existence of an isomorphism between two graphs is denoted by $X \cong Y$.
 We say that a graph $X$ is a \emph{cone} with apex $x$ if $N_X[x]=X$.
A vertex $a$ \emph{dominates} a vertex $x \neq a$ in $X$
if $N_X[x] \subset N_X[a]$ 
and we note $ x \dom a$. Note that a vertex is dominated if, and only if, its open neighbourhood is a cone. 
Two distinct
vertices $x$ and $y$ are \emph{twins} if $N_X[x]=N_X[y]$.
We will denote by ${\rm Twins}(X)$ the set of \textsl{twin vertices} of $X$.

In a finite undirected graph $X$, a vertex 
is $0$-dismantlable if 
it is dominated  and $X$ is $0$-dismantlable
 if it exists an order $x_1,\cdots, x_n$ 
of the vertices of $X$ such that $x_k$ is 0-dismantlable 
in $X[x_k, x_{k+1}, \cdots, x_{n}]$ for $1\leq k \leq n-1$. 
In \cite{bfj}, we have defined a weaker version of dismantlability. 
A vertex $x$ of $X$ is $1$-dismantlable\footnote{\label{note-S}
In \cite{bfj}, a \textsl{1-dismantlable} vertex was called \textsl{s-dismantlable} 
and \textsl{1-homotopy} was called \textsl{$s$-homotopy}.} 
if its open neighbourhood $N_X(x)$ is a $0$-dismantlable graph.  

Generalising the passage from $0$-dismantlability to $1-$dismantlability, 
the higher dismantlabilities
in graphs are defined iteratively by:
 
\begin{definition} \hspace{1mm}
\newline
\indent $\bullet$ The family $C$ of cones (or conical graphs) is also denoted by $D_{-1}$
and we will say that the cones are the graphs which are $(-1)$-dismantlable.

$\bullet$ For any integer $k\geq 0$, a vertex of a graph $X$ is called $k$-dismantlable  
if its open neighbourhood is  $(k-1)$-dismantlable. The graph $X$ is 
$k$-dismantlable if it is reducible to a vertex by successive deletions of
$k$-dismantlable vertices.  We denote by $D_k(X)$ the set of $k$-dismantlable 
vertices of a graph $X$ and by $D_k$ the set of  $k-$dismantlable graphs.
\end{definition}

\noindent A cone is a $0$-dismantlable graph, that is $D_{-1}\subset D_0$,
and  
by induction on $k$, we immediately get:

\begin{prop}\label{inclusion}
$\forall k\in \N, D_{k-1} \subset D_{k}$. 
\end{prop}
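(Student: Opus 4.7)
The plan is to prove the inclusion by induction on $k$, exploiting the recursive character of the definition, since being $k$-dismantlable for both vertices and graphs is defined in terms of being $(k-1)$-dismantlable. The heart of the argument is really a one-line observation at the vertex level that propagates to graphs via the ordering witnessing dismantlability.

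For the base case $k=0$, I would check directly that every cone is $0$-dismantlable. If $X$ is a cone with apex $a$, then $N_X[a]=V(X)$, so for every $x\neq a$ one has $N_X[x]\subseteq N_X[a]$, which means $x\dom a$ and $x$ is dominated (hence $0$-dismantlable). Moreover, removing any such $x$ leaves $X-x$ which is still a cone with apex $a$, so the process can be iterated; ordering the vertices $x_1,\dots,x_{n-1},a$ arbitrarily among the non-apex ones yields a dismantling sequence down to the single vertex $a$. Thus $D_{-1}\subseteq D_0$.

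For the inductive step, assume $D_{k-2}\subseteq D_{k-1}$ and fix $X\in D_{k-1}$. The key sublemma I would isolate is: \emph{if $x$ is $(k-1)$-dismantlable in $X$, then $x$ is $k$-dismantlable in $X$}. Indeed, by definition $N_X(x)\in D_{k-2}$, and by the induction hypothesis $N_X(x)\in D_{k-1}$, which is precisely the condition for $x$ to be $k$-dismantlable. Applying this sublemma at every step, an ordering $x_1,\dots,x_n$ witnessing that $X\in D_{k-1}$ — so that $x_i$ is $(k-1)$-dismantlable in $X[x_i,\dots,x_n]$ for each $1\leq i\leq n-1$ — also witnesses that $X\in D_k$, because each $x_i$ is $k$-dismantlable in that same induced subgraph. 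Hence $X\in D_k$, completing the induction.

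I do not see any real obstacle here: the proposition is essentially a structural consequence of the recursive definition, and the only thing worth checking carefully is the base case, where one must make sure not to conflate \textsl{being a cone} with \textsl{being $0$-dismantlable} (the former trivially implies the latter but is a strictly stronger property, since a $0$-dismantlable graph need only collapse to a point via a sequence of dominations, not be a cone to begin with). Once the base case is in place, the inductive step is immediate.
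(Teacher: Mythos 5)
Your proof is correct and is exactly the argument the paper leaves implicit: the paper only notes that a cone is $0$-dismantlable (your base case) and then states the proposition follows "by induction on $k$", which is precisely your observation that a $(k-1)$-dismantlable vertex is $k$-dismantlable once $D_{k-2}\subseteq D_{k-1}$ is known, so any $(k-1)$-dismantling sequence is a $k$-dismantling sequence. Nothing further to add.
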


If $x \in D_k(X)$, we will say that the graph $X-x$ is obtained from 
the graph $X$
by the $k$-deletion of the vertex $x$ and that the graph $X$ is obtained from
the graph $X-x$ by the $k$-addition of the vertex $x$. 
We write $X \rd k Y$ or  $Y \ep k X$
when $X$ is $k$-dismantlable to a subgraph  $Y$, i.e.:
$$X \rd k X-x_1 \rd k X-x_1 -x_2 \rd k \cdots \rd k X-x_1-x_2 - \cdots -x_r=Y$$
with $x_i \in D_k(X-x_1-x_2-\cdots-x_{i-1})$. 
The sequence $x_1,\cdots,x_r$ is called a {\sl $k$-dismantling sequence}. 
The notation $X \rd k pt$ signifies that $X \in D_k$.
A graph $X$ is \textsl{$k$-stiff} when $D_k(X)=\emptyset$.  
We denote by
$D_{\infty}=\bigcup_{k\geq0}D_k$  
the family of graphs which are $k$-dismantlable for some integer $k \geq 0$. 
Cycles of length greater or equal to $4$ and non-connected graphs 
are two examples of graphs which are not in $D_{\infty}$. 
Finally, we write $[X]_k=[Y]_k$ when it is possible 
to go from $X$ to $Y$ by a succession of additions or deletions 
of $k$-dismantlable vertices. Note that $[X]_k$ is an equivalence class.
Two  graphs $X$ and $Y$ such that $[X]_k=[Y]_k$ will be 
said \textsl{$k$-homotopic}$^{\rm \ref{note-S}}$.
We note that for any integers $k\geq 0$ and $k'\geq 0$, any  graph $X$,
any vertex $x$ of $X$ and any vertex $y$ not in $X$, we have
the following \textsl{switching property}:
$$ (\dag)~~~~~~~~~~~~~~
\text{if }~~ X \rd k X-x \ep {k'} (X-x)+y ~~\text{ then }~~
X \ep {k'} X+y \rd k (X+y)-x.$$
Actually, since $x \not\sim y$, this property results from $N_X(x)=N_{X+y}(x)$ 
and $N_{X+y-x}(y)=N_{X+y}(y)$. In particular,
this implies that two graphs  $X$ and $Y$ are $k$-homotopic 
if, and only if, there exists a graph $W$
such that  $X \ep k W \rd k Y$. Nevertheless, the
notion of $k$-homotopy classes is not so relevant 
(see Proposition \ref{prop-Xk-egale-X1}).

\begin{remark}\label{rmk-on-peut-pas-echanger}
Let us also note that the reverse implication of $(\dag)$
is false (see. 
Fig.\ref{figure_bad-exchange}
for a counterexample).

\begin{figure}[ht]
\begin{center}
\includegraphics[width=10cm]{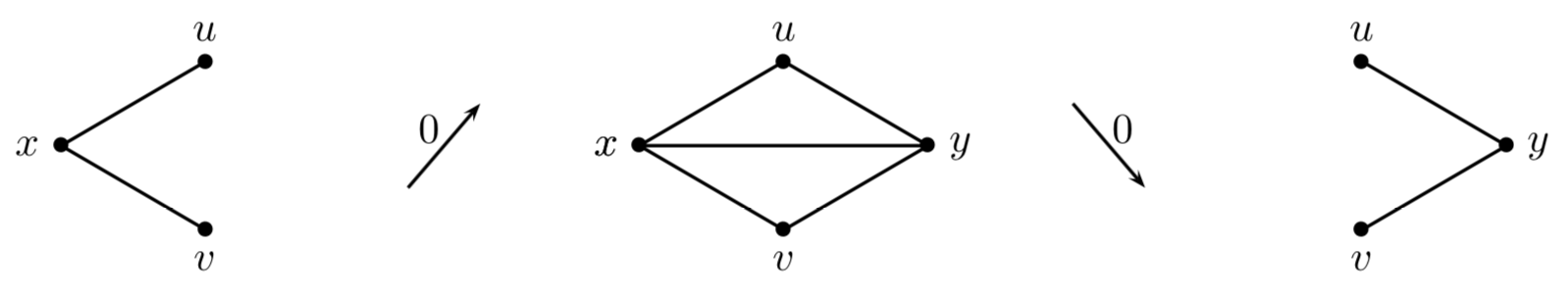}
\caption{Let $Y$ be the 2-path $uxv$:
$Y \ep 0 Y+y \rd 0 (Y+y)-x$ 
but $Y \rd k Y-x$ is impossible for~any~$k$. }
\label{figure_bad-exchange}
\end{center}
\end{figure}

\end{remark}

\subsection{Simplicial complexes}\label{section-simplicial-cxs}

For general facts and references on simplicial complexes, 
see \cite{kozlov}. We recall that a \textsl{ finite abstract
simplicial complex}  $\cK$ is given  by a finite set of vertices 
$V(\cK)$ and a collection  of subsets $\Sigma (\cK)$
of $V(\cK)$ stable by deletion of elements: 
if $\sigma\in \Sigma(\cK)$
and $\sigma'\subset \sigma$, then $\sigma' \in \Sigma(\cK)$.
The elements of $\Sigma(\cK)$ are the \textsl{simplices}
of $\cK$.  If $\sigma$ is a simplex
of cardinal $k \geq 1$, then its dimension is $k-1$ 
and the dimension of $\cK$ 
is the maximum dimension of a simplex of $\cK$. 
The $j$-skeleton of $\cK$
consists of all simplices of dimension $j$ or less. 

Let us recall that for a simplex $\sigma$ of a finite 
simplicial complex $\cK$,
${\rm link}_{\cK}(\sigma)=\{\tau \in \cK \:, \sigma \cap \tau=\emptyset  
~{\rm and}~\sigma \cup \tau \in \cK\}$ is a sub-complex of $\cK$ and 
${\rm star}_{\cK}^0(\sigma)=\{\tau \in \cK \:, \sigma \subset 
\tau\}$ is generally not a sub-complex of $\cK$. 
If $\tau$ and $\sigma$ are two simplices of $\cK$, we say
that $\tau$ is a \textsl{face} (resp. a \textsl{proper face}) 
of $\sigma$  if $\tau \subset \sigma$ 
(resp. $\tau \subsetneq \sigma$).
An \textsl{elementary simplicial collapse} is the suppression 
of a pair of simplices $(\sigma,\tau)$ such that 
$\tau$ is a proper maximal face of $\sigma$ and $\tau$ is not the
face of another simplex (one says that $\tau$ is
a \textsl{free face} of $\cK$).
We denote by $\cK-x$ the sub-complex of $\cK$ induced by the vertices 
distinct from $x$. As defined in \cite{barmin12}, an 
\textsl{elementary  strong collapse}  (or $0$-collapse) in $\cK$ 
is a suppression of a  vertex $x$ such that ${\rm link}_{\cK}(x)$ 
is a simplicial cone.  There is a strong collapse from $\cK$ 
to $\cL$ if  there exists a sequence of elementary strong 
collapses that changes $\cK$  into $\cL$; in that case we 
also say that $\cK$ 0-collapses to
$\cL$.  A simplicial complex is 0-collapsible or
strong collapsible if it 0-collapses to a point. 
By induction, for any integer $k\geq 1$, a vertex of $\cK$
is $k$-collapsible  if ${\rm link}_{\cK}(x)$ is 
$(k-1)$-collapsible. There is a $k$-collapse from $\cK$ to $\cL$ if 
there exists a sequence of elementary $k$-collapses that changes $\cK$ 
into $\cL$ and, in that case, both complexes have the same simple homotopy type. A simplicial complex is $k$-collapsible  if it
$k$-collapses to a point.

Let also recall that a simplicial
complex is \textsl{non-evasive} if it is $k$-collapsible for 
some $k\geq0$ \cite[Definiton 5.3]{barmin12}. A  not non-evasive complex is called \textsl{evasive}. 

When considering graphs, simplicial complexes 
arise naturally by the way of flag complexes.
For any graph $X$, we denote by $\cl(X)$ the abstract simplicial complex
such that $V(\cl(X))=V(X)$ and whose simplices are the subsets of $V(X)$ 
 which induce
a clique of $X$. The simplicial complex $\cl(X)$ is called
the \textsl{clique complex} of $X$ and clique complexes are also called
\textsl{flag complexes} \cite{kozlov}.
A flag complex $\cK$ is completely determined by its 1-skeleton  
(in other words, every flag complex  is the clique complex of its 
1-skeleton) and a simplicial complex $\cK$ is a flag complex 
if, and only if,
its minimal non-simplices are of cardinal 2. Remind that a 
non-simplex of $\cK$ is
a subset of $V(\cK)$  which is not a simplex of $\cK$ and so a
non-simplex $\sigma \subset V(\cK)$ is minimal if all proper subsets of
$\sigma$ are simplices of $\cK$.

 Given a vertex $x$ of a graph $X$, by definition we 
 have ${\rm link}_{\cl(X)}(x)=\cl(N_X(x))$. So, it is 
 easy to observe that a graph $X$ is in $D_0$ if and only 
 if $\cl(X)$ is $0$-collapsible \cite[Theorem 4.1]{fl} and more generally:
 
\begin{prop}\label{prop_link}
For all integer $k\geq 0$,
$X \in D_k$ if, and only if, $\cl(X)$ is $k$-collapsible.
\end{prop}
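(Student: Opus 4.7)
The plan is a straightforward induction on $k \geq 0$. The base case $k = 0$ is precisely \cite[Theorem 4.1]{fl} cited just before the statement. For the inductive step, assume the equivalence $X \in D_{k-1} \Leftrightarrow \cl(X)$ is $(k-1)$-collapsible holds for all graphs $X$, and aim to establish it at level $k$.

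Two ingredients would do most of the work. First, I would verify the single-vertex compatibility: for any vertex $x$ of $X$,
$$x \in D_k(X) \iff x \text{ is } k\text{-collapsible in } \cl(X).$$
This is immediate from chaining three equivalences: by definition $x \in D_k(X)$ iff $N_X(x) \in D_{k-1}$; by the induction hypothesis the latter is equivalent to $\cl(N_X(x))$ being $(k-1)$-collapsible; and by the identification ${\rm link}_{\cl(X)}(x) = \cl(N_X(x))$ recalled just above the statement, this in turn is exactly the definition of $x$ being $k$-collapsible in $\cl(X)$. Second, I would record the elementary identity $\cl(X - x) = \cl(X) - x$, which follows from the fact that a flag complex is determined by its $1$-skeleton: a subset $S \subseteq V(X) \setminus \{x\}$ is a simplex of $\cl(X - x)$ iff it induces a clique in $X$ not containing $x$, iff it is a simplex of $\cl(X) - x$.

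Given these two ingredients, the equivalence at level $k$ follows by iteration. If $X$ is $k$-dismantlable via a sequence $x_1, \ldots, x_{n-1}$, then at each step $x_i \in D_k(X - x_1 - \cdots - x_{i-1})$, which by the first ingredient means $x_i$ is a $k$-collapsible vertex of $\cl(X - x_1 - \cdots - x_{i-1})$; by the second ingredient this complex equals $\cl(X) - x_1 - \cdots - x_{i-1}$, so the same sequence realises a sequence of elementary $k$-collapses reducing $\cl(X)$ to a single vertex, i.e. a $0$-simplex. The converse direction is identical, reading the same chain of equivalences in reverse and using that any vertex-sequence reducing $\cl(X)$ by elementary $k$-collapses corresponds, via the same two ingredients, to a $k$-dismantling sequence for $X$.

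There is essentially no real obstacle here; the only point requiring a little care is bookkeeping, namely making sure the inductive hypothesis is applied to the intermediate graphs $X - x_1 - \cdots - x_{i-1}$ along the dismantling sequence rather than only to $X$ itself, so that the identification of links with clique complexes of open neighbourhoods remains valid at each intermediate stage. Since both removal operations (vertex deletion in $X$ and in $\cl(X)$) commute with the passage to links, this causes no difficulty.
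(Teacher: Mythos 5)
Your proof is correct and follows exactly the route the paper intends: the paper states this proposition without a written proof, presenting it as an immediate generalisation of the case $k=0$ via the identity ${\rm link}_{\cl(X)}(x)=\cl(N_X(x))$, which is precisely the induction you carry out. Your two ingredients (the vertex-level equivalence from the induction hypothesis, and $\cl(X-x)=\cl(X)-x$) supply the bookkeeping the paper leaves implicit, so nothing is missing.
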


\noindent So,
by Proposition \ref{prop_link}, the set of non-evasive 
flag complexes is in one to one correspondence with $D_{\infty}$.
Before closing this section, it is important to note that 
since $k$-collapses don't change 
the simple homotopy type:

\begin{prop}\label{prop-Xk-egale-X1}
For all integer $k\geq 1$, $[X]_1=[X]_k$.
\end{prop}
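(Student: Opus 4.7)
The plan is to prove the two inclusions $[X]_1\subset [X]_k$ and $[X]_k\subset [X]_1$ separately.

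For the inclusion $[X]_1\subset [X]_k$, I would argue directly from Proposition \ref{inclusion}. Iterating it gives $D_1\subset D_k$ for every $k\geq 1$, and applied to neighbourhoods, this also yields $D_1(X)\subset D_k(X)$ for every graph $X$ and every vertex. Consequently, any sequence of $1$-additions and $1$-deletions witnessing $[X]_1=[Y]_1$ is, a fortiori, a sequence of $k$-additions and $k$-deletions witnessing $[X]_k=[Y]_k$.

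For the reverse inclusion $[X]_k\subset [X]_1$, the strategy is to route through clique complexes. First I would observe that Proposition \ref{prop_link} has a pointwise version: a vertex $x$ is $k$-dismantlable in $X$ if and only if $\text{link}_{\cl(X)}(x)=\cl(N_X(x))$ is $(k-1)$-collapsible, i.e.\ if and only if $x$ is a $k$-collapsible vertex of $\cl(X)$. Hence a $k$-deletion $X\rd k X-x$ in the graph corresponds exactly to an elementary $k$-collapse $\cl(X)\rd k \cl(X-x)$ of simplicial complexes, and similarly for additions. If $[X]_k=[Y]_k$, then concatenating these individual steps shows that $\cl(X)$ and $\cl(Y)$ are connected by a zig-zag of $k$-collapses. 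Since $k$-collapses preserve simple homotopy type (as recalled in Section \ref{section-simplicial-cxs}), $\cl(X)$ and $\cl(Y)$ have the same simple homotopy type.

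The final step is to invoke the characterization from \cite{bfj} recalled in the introduction: two graphs $X$ and $Y$ have the same $1$-homotopy type if and only if $\cl(X)$ and $\cl(Y)$ have the same simple homotopy type. This gives $[X]_1=[Y]_1$ and closes the argument. The main technical point to be careful about is the pointwise/relative version of Proposition \ref{prop_link}, but this is immediate from the definitions since $k$-dismantlability of a single vertex is defined via the $(k-1)$-dismantlability of its open neighbourhood, which under $\cl$ corresponds exactly to the $(k-1)$-collapsibility of its link.
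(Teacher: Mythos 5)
Your proposal is correct and follows essentially the same route as the paper: the easy inclusion comes from $D_1\subset D_k$, and the reverse inclusion passes through the fact that $k$-moves preserve the simple homotopy type of the clique complex, concluding with \cite[Theorem 2.10]{bfj}. The only difference is that you spell out the pointwise correspondence between $k$-deletions of vertices and elementary $k$-collapses of $\cl(X)$, which the paper leaves implicit.
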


\begin{proof}\label{prop-classe1=classek}
Of course, a graph 1-homotopic to $X$ is also $k$-homotopic to $X$.
Now, let $Y$ be a graph $k$-homotopic to $X$.
The clique
complexes $\cl(X)$ and $\cl(Y)$ have the same simple
simplicial homotopy type 
and, by \cite[Theorem 2.10]{bfj} 
where $[X]_1$ is denoted by $[X]_s$ and $\cl(X)$ is 
denoted by $\Delta(X)$, this implies $[X]_1=[Y]_1$. In particular, 
$Y$ is 1-homotopic to $X$ and, finally, $[X]_1=[X]_k$.
\end{proof}

\section{A hierarchy of families}

\subsection{The family of cubion graphs $(\cubion_n)_{n\in \N}$}

From Proposition \ref{prop_link}, we know that if a graph $X$ 
is $k$-dismantlable for some $k$,
then $\cl(X)$ is a non-evasive simplicial complex.
It is also known \cite{bjorner, kozlov}
that non-evasive simplicial complexes are collapsible and,
\textit{a fortiori}, contractible in the usual topological sense
when the simplicial complex is considered as a topological space 
by the way of some geometrical realisation. 
In particular, this means that a graph whose clique complex
is not contractible cannot be $k$-dismantlable whatever is the integer $k$:

\begin{lemma}\label{lemme-clef}
Given $X_0 \subset X$ and $X \rd k X_0$ for $k\geq 0$, 
if $\cl(X_0)$ is non-contractible, so is $\cl(X)$ 
and $X \notin D_{\infty}$.
\end{lemma}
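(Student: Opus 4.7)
The statement bundles two conclusions, but both follow once we know that $\cl(X)$ and $\cl(X_0)$ share the same homotopy type. So I would first prove this topological invariance step, then extract the two conclusions.

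For the invariance step, I would iterate a single-vertex lemma that is essentially the content of Proposition~\ref{prop_link}: if $x \in D_k(X)$, then by definition $N_X(x) \in D_{k-1}$, and applying Proposition~\ref{prop_link} at level $k-1$ gives that $\cl(N_X(x))$ is $(k-1)$-collapsible; but $\cl(N_X(x)) = {\rm link}_{\cl(X)}(x)$, so $x$ is a $k$-collapsible vertex of $\cl(X)$. Since $\cl$ takes $X - x$ to the induced subcomplex $\cl(X) - x$, this gives an elementary $k$-collapse $\cl(X) \searrow \cl(X-x)$. Iterating along the dismantling sequence witnessing $X \rd k X_0$ yields a $k$-collapse $\cl(X) \searrow \cl(X_0)$.

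From the remark recalled in Section~\ref{section-simplicial-cxs}, $k$-collapses preserve the simple homotopy type, and in particular the ordinary homotopy type. Hence $\cl(X)$ is contractible if and only if $\cl(X_0)$ is, which proves the first conclusion: since $\cl(X_0)$ is non-contractible by hypothesis, so is $\cl(X)$. For the second conclusion, I would argue by contradiction: if $X \in D_{\infty}$ then $X \in D_{k'}$ for some $k' \geq 0$, so Proposition~\ref{prop_link} forces $\cl(X)$ to be $k'$-collapsible, hence non-evasive. The classical implications non-evasive $\Rightarrow$ collapsible $\Rightarrow$ contractible (the references \cite{bjorner, kozlov} are already cited earlier in the paper) would then contradict the non-contractibility of $\cl(X)$ just established.

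I do not anticipate a genuine obstacle: the lemma is a packaging of Proposition~\ref{prop_link} with standard facts. The one point that deserves care is the compatibility between vertex-deletion in a graph and vertex-deletion in its flag complex, so that the dismantling sequence for $X$ translates term by term into a $k$-collapsing sequence for $\cl(X)$; this is precisely the content absorbed into Proposition~\ref{prop_link} and need only be invoked, not re-proved.
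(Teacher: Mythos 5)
Your proposal is correct and follows essentially the same route as the paper, which states the lemma as an immediate consequence of the preceding discussion: the $k$-dismantling sequence translates vertex by vertex into $k$-collapses of the clique complexes (the content of Proposition~\ref{prop_link}), $k$-collapses preserve the (simple) homotopy type, and non-evasive complexes are collapsible hence contractible, so $X \notin D_{\infty}$. The only cosmetic point is that for $k=0$ your appeal to Proposition~\ref{prop_link} ``at level $k-1$'' should be replaced by the definition itself ($N_X(x)$ a cone gives ${\rm link}_{\cl(X)}(x)$ a simplicial cone, i.e.\ an elementary $0$-collapse), which changes nothing of substance.
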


Let us now show that the inclusions in Proposition \ref{inclusion} 
are strict. 

\begin{definition}{\bf [n-Cubion]}
$\forall n\in\mN$, the \emph{n-Cubion} is the graph 
$\cubion_n$ with vertex set $V(\cubion_n)=
\{\a _{i,\epsilon}, i=1,\cdots,n {\rm ~and~} 
\epsilon = 0,1\} \cup \{x=(x_1,\cdots,x_n), x_i =0,1\}$ 
and edge set $E(\cubion_n)$ defined by: \\
$\bullet \: \forall i \neq j, \forall \epsilon, \epsilon' 
\in \{0,1\}, \a_{i,\epsilon} \sim \a_{j,\epsilon'} $\\
$\bullet \: \forall x \neq x', x \sim x'$\\
$\bullet \: \forall i, \a_{i,1} \sim 
(x_1,\cdots,x_{i-1},1,x_{i+1},\cdots,x_n)$ 
{\rm and} 
$\a_{i,0} \sim (x_1,\cdots,x_{i-1},0,x_{i+1},\cdots,x_n)$
\end{definition}

The $n$-Cubion has $2^n+2n$ vertices partitioned 
into two sets such that: 
$$\cubion_n[\a_{1,0},\a_{1,1},\cdots,\a_{n,0},\a_{n,1}] 
\cong \overline{nK_2} \, \text{  and  } \, 
\cubion_n[x, x\in \{0,1\}^n] \cong K_{2^n}.$$
The $n$-cubion
is built from the \textsl{$n$-hypercube}  
with vertices the $n$-tuples 
$x=(x_1, \cdots, x_n)$ $\in \{0,1\}^n$, each one 
connected to all the others, 
by adding $2n$ vertices $\a _{i,\epsilon}$ 
which induce an \textsl{$n$-octahedron} $\overline{nK_2}$ and 
each $\a _{i,\epsilon}$ is the apex of a cone whose base is the
$(n-1)$-face of the hypercube given by $x_i=\epsilon$. 
This definition gives an iterative process to construct
$\cubion_{n+1}$ from $\cubion_n$.

One sees that $\cubion_1\cong P_4$ the path of length $3$ 
and, clearly, $\cubion_1 \in D_0\setminus D_{-1}$.
The cubion $\cubion_2$ represented in Fig.  \ref{CCC} is in 
$D_1 \setminus D_0$. Indeed, $D_0(\cubion_2)=\emptyset$ 
but $\cubion_2 \rd 1 \cubion_2 - \a_{1,0}$ and  
$\cubion_2 - \a _{1,0} \in D_0$.

\begin{figure}
\begin{center}
\includegraphics[width=15cm]{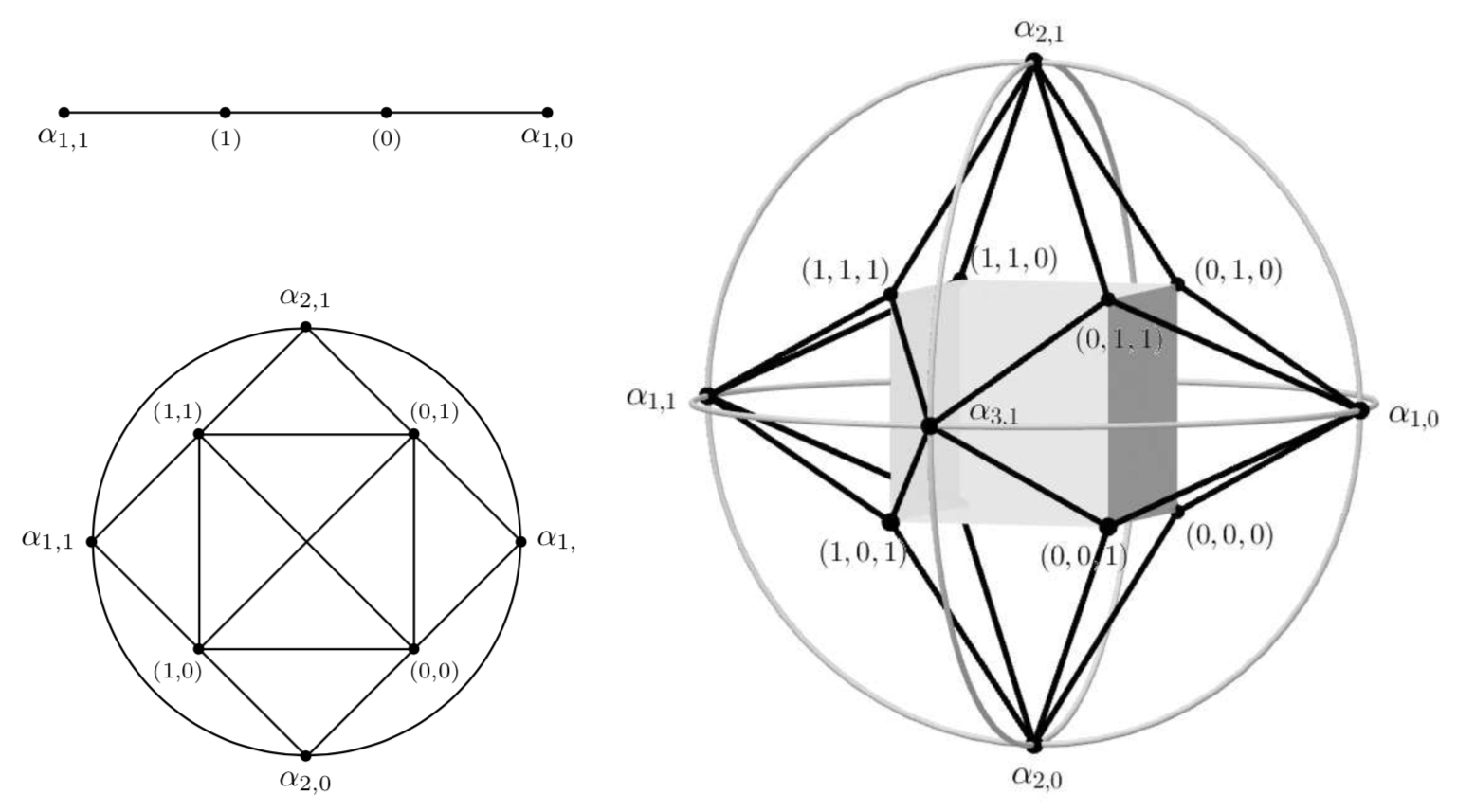}
\caption{(top left) $\cubion_1 \in D_0 \setminus D_{-1}$,  (bottom left) $\cubion_2\in D_1 \setminus D_0$, (right) $\cubion_3\in D_2 \setminus D_1$. The drawing of the $3$-cubion is a perspective view where the central clique $K_8$ is symbolized by a cube: edges of the $K_8$ (ie. between $x$-type vertices) are not drawn, edges between $x$-type and $\alpha$-type vertices are in black, and edges between $\alpha$-type vertices are in grey.  }
\label{CCC}
\end{center}
\end{figure}

\noindent More generally, we get: 

\begin{prop} \label{echelle}
$\forall n \geq 2, \cubion_n \in D_{n-1}\setminus D_{n-2}$.
\end{prop}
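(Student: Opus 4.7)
My plan is induction on $n \geq 2$, using $\cubion_1 \cong P_4 \in D_0 \setminus D_{-1}$ (verified directly and already noted in the text) both as the $n=2$ base case and as the degenerate instance of the induction hypothesis. Two structural observations will drive both halves. First, for every $\alpha$-vertex, $N_{\cubion_n}(\alpha_{i,\epsilon}) \cong \cubion_{n-1}$: the $2(n-1)$ remaining $\alpha$-vertices induce $\overline{(n-1)K_2}$, the $2^{n-1}$ vertices $x$ with $x_i=\epsilon$ induce $K_{2^{n-1}}$, and the bipartite rule $\alpha_{j,\epsilon'}\sim x \Leftrightarrow x_j=\epsilon'$ for $j\neq i$ is exactly the defining adjacency of $\cubion_{n-1}$. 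Second, once the non-adjacent pair $\alpha_{1,0},\alpha_{1,1}$ has been deleted, any two vertices of $\{0,1\}^n$ differing only in their first coordinate become twins, since the only neighbors that distinguished them in $\cubion_n$ were $\alpha_{1,0}$ and $\alpha_{1,1}$.

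For $\cubion_n\in D_{n-1}$, the induction hypothesis $\cubion_{n-1}\in D_{n-2}$ combined with the first observation makes $\alpha_{1,0}$ an $(n-1)$-dismantlable vertex of $\cubion_n$, and likewise $\alpha_{1,1}$ in $\cubion_n-\alpha_{1,0}$ (its neighborhood is unchanged). The twin observation then allows removing the $2^{n-1}$ vertices with $x_1=0$ as $0$-deletions, each dominated by its twin with $x_1=1$, which are $(n-1)$-deletions by Proposition~\ref{inclusion}. A direct re-indexing shows the remaining graph is isomorphic to $\cubion_{n-1}\in D_{n-2}\subset D_{n-1}$, and concatenation yields an $(n-1)$-dismantling of $\cubion_n$.

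For $\cubion_n\notin D_{n-2}$, it suffices to prove the stronger $D_{n-2}(\cubion_n)=\emptyset$, since otherwise no dismantling sequence can begin. For an $\alpha$-vertex the isomorphism $N(\alpha_{i,\epsilon})\cong\cubion_{n-1}$ combined with the induction hypothesis $\cubion_{n-1}\notin D_{n-3}$ does the job. For an $x$-vertex I use the coordinate-flip symmetry to reduce to $x=0$ and apply Lemma~\ref{lemme-clef} after $0$-dismantling $N_{\cubion_n}(0)$ onto $\overline{nK_2}$. The key point is that every $y\in\{0,1\}^n\setminus\{0\}$ of Hamming weight $\geq 2$ is dominated in $N(0)$ by any basis vector $e_i$ with $y_i=1$: the inclusion $\{k:y_k=0\}\subset\{k:(e_i)_k=0\}$ makes $y$'s $\alpha$-neighbors a subset of $e_i$'s, and both share the same $y'$-type neighbors. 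Removing such vertices in decreasing order of weight (weight-$1$ vertices remain throughout and serve as dominators) leaves the graph on $\{e_1,\ldots,e_n\}\cup\{\alpha_{1,0},\ldots,\alpha_{n,0}\}$, which is $K_{2n}$ minus the perfect matching $\{e_i\alpha_{i,0}\}_i$, i.e.\ the octahedron $\overline{nK_2}$. Its clique complex is the boundary of the $n$-dimensional cross-polytope, homotopy equivalent to $S^{n-1}$ and hence not contractible; Lemma~\ref{lemme-clef} then yields $N(0)\notin D_\infty\supset D_{n-3}$.

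The main obstacle is the negative direction: recognizing the sphere-like residue $\overline{nK_2}$ of $N(x)$ after $0$-dismantling, and justifying the order in which the dominated $y$-vertices can be peeled off. The positive direction is essentially mechanical once the identity $N(\alpha_{i,\epsilon})\cong\cubion_{n-1}$ and the post-deletion twin phenomenon are in hand.
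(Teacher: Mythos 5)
Your proposal is correct and follows essentially the same route as the paper's proof: the isomorphism $N_{\smallcubion_n}(\a_{i,\epsilon})\cong\cubion_{n-1}$, the $0$-dismantling of $N_{\smallcubion_n}(x)$ onto the octahedron $\overline{nK_2}$ (handled via domination by the weight-one flips) combined with Lemma \ref{lemme-clef}, the conclusion $D_{n-2}(\cubion_n)=\emptyset$, and the positive direction via deleting a non-adjacent pair $\a_{i,0},\a_{i,1}$ and then collapsing twins to recover $\cubion_{n-1}$. The only differences are cosmetic (normalizing $x$ to the all-zeros vertex by symmetry and using the index $1$ rather than $n$).
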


\begin{proof}

\vspace{.15cm}
\noindent 1. Let us first prove that for any $i$, $\epsilon$ and $x$,
$N_{\smallcubion_n}(\a_{i,\epsilon}) \cong \cubion_{n-1}$ 
and  $N_{\smallcubion_n}(x) \rd 0  \overline{nK_2}$.

\noindent $\bullet$ 
$N_{\smallcubion_n}(\a_{i,\epsilon}) \cong \smallcubion_{n-1}$: 
on one hand, given $i$ and $\epsilon$, the vertex
$\a_{i,\epsilon}$ is linked to all the $\a_{j,\epsilon'}$ 
except when $i=j$. Thus,
$N_{\cubion_n[\a_{1,0},\a_{1,1},\cdots,\a_{n,0},\a_{n,1}]}(\a_{i,\epsilon}) 
\cong \overline{(n-1)K_2}$.
On the other hand, within the set of the $n$-tuples 
$x=(x_1,\cdots,x_n)$, $\a_{i,\epsilon} \sim
(x_1,\cdots,x_{i-1},\epsilon,x_{i+1},\cdots,x_n)$. 
These $2^{n-1}$ vertices $x$ whose $i^{th}$ entry
is fixed and equal to $\epsilon$ are  all linked 
together and thereby induce a subgraph isomorphic 
to $K_{2^{n-1}}$ in $\cubion_n$. The edges between
$\overline{(n-1)K_2}$ and $K_{2^{n-1}}$ 
are inherited from $\cubion_n$ and thus
$N_{\smallcubion_n}(\a_{i,\epsilon}) \cong \cubion_{n-1}$.

\noindent $\bullet$ $N_{\cubion_n}(x) \rd 0 \overline{nK_2}$: 
among all the $\a_{i,\epsilon}$  the vertex $x=(x_1,\cdots,x_n)$ 
is linked exactly to the $n$ vertices
$\a_{1,x_1}, \cdots, \a_{n,x_n}$.
Let $X=N_{\smallcubion_n}(x) \setminus 
\{\a_{1,x_1}, \a_{2,x_2}, \cdots, \a_{n,x_n}\}$, a
partition of $X$ is given by $X^0 \cup X^1 \cup \cdots \cup X^{n-1}$ 
with $X^k=\{y \in X,y \text{ is linked to exactly } k 
\text{ vertices } \a_{i,x_i}\}$. 
Clearly $X^i$ has $\binom{n}{i}$ elements. 
For example, we  have $X^0=\{(1-x_1,1-x_2,\cdots,1-x_{n-1},1-x_n)\}$
and $X^{n-1}=\{\hat{x_i}, i=1,\cdots,n\}$  with
$\hat{x_i}=(x_1,x_2,\cdots,x_{i-1},1-x_{i},x_{i+1},
\cdots, x_{n-1}, x_n)$.  For any $y\in X \setminus X^{n-1}$,  
there exist $i \neq j$,  such that 
$y_i=1-x_i$ and $y_j=1-x_j$. Hence, $y$ is dominated by $\hat{x_i}$ 
and $\hat{x_j}$ both in $N_{\smallcubion_n}(x)$. 
By successive $0$-dismantlings  of the vertices $y$, 
we obtain $N_{\smallcubion_n}(x) \rd 0 X^{n-1} \cup \{\a_{1,x_1},
\a_{2,x_2},  \cdots, \a_{n,x_n}\}$. Finally, just notice that,  
between the vertices of $X^{n-1} \cup 
\{\a_{1,x_1}, \a_{2,x_2}, \cdots, \a_{n,x_n}\}$, all 
the possible edges exist except the 
$\hat {x_i} \a_{i,x_i}$ 
and thus $X^{n-1} \cup \{\a_{1,x_1}, \a_{2,x_2}, 
\cdots, \a_{n,x_n}\} \cong \overline{nK_2}$.  

\vspace{.15cm}
\noindent 2. By induction on $n$, 
$\cubion_{n-1} \in D_{n-2}\setminus D_{n-3}$ and
as we have proven that 
$N_{\smallcubion_n}(\a_{i,\epsilon}) \cong \cubion_{n-1}$, 
$\a_{i,\epsilon} \in D_{n-1}(\cubion_n) \setminus
D_{n-2}(\cubion_n)$. 
Moreover, since the simplicial complex $\cl(\overline{nK_2})$ 
is non-contractible because it is a triangulation of 
the sphere $S^{n-1}$, Lemma \ref{lemme-clef}
implies $N_{\cubion_n}(x) \notin D_{\infty}$
and thus  $x \notin D_{n-2}(\cubion_n)$. 
Therefore $D_{n-2}(\cubion_n)=\emptyset$ and 
$\cubion_n \notin D_{n-2}$. Now, 
$$\cubion_n \rd {n-1} \cubion_n - \{\a_{n,0},\a_{n,1}\}$$ since
$\a_{n,0}$ and $\a_{n,1}$ are $(n-1)$-dismantlable and not linked. 
In $\cubion_n - \{\a_{n,0},\a_{n,1}\}$, note that
$(x_1,\cdots,x_{n-1},0)$ and $(x_1,\cdots,x_{n-1},1)$ are twins 
and therefore 
$$\cubion_n - \{\a_{n,0},\a_{n,1}\} \rd 0 
\cubion_n - \bigl\{\a_{n,0},\a_{n,1}, 
(x_1,\cdots,x_{n-1},0);\: (x_1,\cdots,x_{n-1})\in \{0,1\}^{n-1}
\bigr\}\cong
\cubion_{n-1}.$$
By induction hypothesis, $\cubion_{n-1} \in 
D_{n-2} \subset D_{n-1}$. Finally, $\cubion_n \in D_{n-1}$. 
\end{proof}

\vspace{.15cm}
Propositions \ref{inclusion} and \ref{echelle} 
now give the following theorem: 

\begin{theorem}\label{sequence}
The sequence $(D_k)_{k \geq 0}$ is strictly increasing: 
$$D_{-1} \subsetneq D_0 \subsetneq D_1 \subsetneq D_2 \subsetneq \cdots
\subsetneq D_k \subsetneq D_{k+1} \subsetneq \cdots $$
\end{theorem}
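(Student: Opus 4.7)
The plan is to combine the two preceding propositions. The non-strict inclusions $D_{-1}\subset D_0\subset D_1\subset\cdots$ are already given by Proposition \ref{inclusion} (together with the remark right after the definition that every cone is $0$-dismantlable, yielding $D_{-1}\subset D_0$). So the only work left is to exhibit, for each $k\geq -1$, a graph lying in $D_{k+1}\setminus D_{k}$ in order to witness strictness.

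For the very first inclusion $D_{-1}\subsetneq D_0$, I would invoke the explicit example $\cubion_1\cong P_4$ already noted in the text: a path on four vertices is clearly $0$-dismantlable (delete a leaf, then proceed), but it is not a cone, so $\cubion_1\in D_0\setminus D_{-1}$. For the remaining inclusions $D_{k-1}\subsetneq D_k$ with $k\geq 1$, I would simply set $n=k+1$ in Proposition \ref{echelle}: this yields $\cubion_{k+1}\in D_{k}\setminus D_{k-1}$, which is exactly what is needed. Thus every successive inclusion in the chain is strict.

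Putting the two ingredients together gives the full strict chain
\[
D_{-1}\subsetneq D_0 \subsetneq D_1 \subsetneq D_2 \subsetneq \cdots \subsetneq D_k \subsetneq D_{k+1} \subsetneq \cdots
\]
No obstacle is expected here: the real content is packed into Proposition \ref{echelle}, where the hard step was verifying that $\cubion_n$ cannot be reduced to a point by $(n-2)$-dismantlings (using Lemma \ref{lemme-clef} and the non-contractibility of the triangulation of $S^{n-1}$ by $\overline{nK_2}$), while still admitting an $(n-1)$-dismantling sequence. Once that proposition is in hand, the present theorem is a one-line corollary, so the proof should consist essentially of citing Proposition \ref{inclusion} for the inclusions and Proposition \ref{echelle} (plus the $P_4$ example) for the strictness.
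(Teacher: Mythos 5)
Your proposal is correct and is exactly the argument the paper intends: the theorem is stated as an immediate consequence of Proposition \ref{inclusion} (for the inclusions) and Proposition \ref{echelle} together with the $\cubion_1\cong P_4$ example (for strictness), and your index bookkeeping ($n=k+1$) is right. Nothing to add.
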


There are no graphs with fewer vertices than $\cubion_1$ 
in $D_0 \setminus D_{-1}$. One can verify the same result 
for $\cubion_2$ in $D_1\setminus D_0$, but there are 
graphs in $D_1\setminus D_0$ with fewer edges.

\subsection{Critical $k$-dismantlability}

Let's complete this section with results
on graphs  in $D_k \setminus D_{k-1}$  with $k\geq 1$. 
Such  a graph $X$  does not always have a vertex 
in  $D_k(X)\setminus D_{k-1}(X)$. 
Indeed, by duplicating each vertex of a graph
in $D_k\setminus D_{k-1}$ with a twin, we get
a new graph also  in $D_k\setminus D_{k-1}$
in which each  vertex 
is $0$-dismantlable and, so, 
is not in $D_k(X)\setminus D_{k-1}(X)$.
However we have the following result:

\begin{lemma}\label{lem-DkMaisPasDk-1}
Given $k\in \N^{\star}$ and $X \in D_k \setminus D_{k-1}$, 
there exists $x\in V(X)$ and $Y$ an induced subgraph of $X$ 
such that  $x\in D_{k}(Y)\setminus D_{k-1}(Y)$.
\end{lemma}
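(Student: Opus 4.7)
My plan is to read off the desired pair $(Y,x)$ directly from any $k$-dismantling sequence of $X$. Fix such a sequence $x_1, x_2, \ldots, x_r$ reducing $X$ to a single vertex, and set $X_i := X - x_1 - \cdots - x_{i-1}$, so that $x_i \in D_k(X_i)$ by the very definition of a $k$-dismantling sequence. Each $X_i$ is, in particular, an induced subgraph of $X$, and $x_i$ belongs to $V(X_i)$.

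I would then proceed by contradiction on the index $i$. Suppose that, in addition, $x_i \in D_{k-1}(X_i)$ holds for \emph{every} $i \in \{1,\ldots,r\}$. Then the same sequence $x_1,\ldots,x_r$ would be a $(k-1)$-dismantling sequence for $X$, since at each step the vertex being removed is $(k-1)$-dismantlable in the current graph. This would give $X \in D_{k-1}$, contradicting the hypothesis $X \in D_k \setminus D_{k-1}$. Hence there exists at least one index $i_0$ such that $x_{i_0} \in D_k(X_{i_0}) \setminus D_{k-1}(X_{i_0})$, and setting $Y := X_{i_0}$ and $x := x_{i_0}$ produces the required induced subgraph and vertex.

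I expect no significant obstacle; the argument is essentially a one-line pigeonhole observation on the dismantling sequence. The only point requiring some care is to keep in mind the distinction between $k$-dismantlability of a vertex within the ambient graph $X$ and within an intermediate induced subgraph $X_{i_0}$: the remark preceding the lemma (duplicating every vertex by a twin) shows that a graph in $D_k \setminus D_{k-1}$ can have every vertex $0$-dismantlable, so no vertex of $X$ itself need lie in $D_k(X) \setminus D_{k-1}(X)$. This is exactly why the statement is phrased in terms of an induced subgraph $Y$ rather than $X$, and why the argument must locate the witnessing vertex at an intermediate stage of the dismantling rather than at the very beginning.
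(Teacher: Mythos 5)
Your argument is correct and coincides with the paper's own proof: both fix a $k$-dismantling sequence, observe that if every step were also a $(k-1)$-dismantling one would get $X \in D_{k-1}$, and extract the first offending stage as the pair $(Y,x)$. Nothing to add.
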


\begin{proof}
Set $V(X)=\{x_1,\cdots, x_n\}$ and suppose that
$x_1,\cdots,x_{n-1}$  is a $k$-dismantling sequence 
from $X$ to the point $x_n$. By definition, 
$\forall i\in \{1, \cdots, n-1\}$, $x_i \in D_k(X[x_i,x_{i+1}, 
\cdots, x_{n}])$.  Since $X \not\in D_{k-1}$,  
the sequence $x_1,\cdots,x_{n-1}$ is not a  $(k-1)$-dismantling
sequence of $X$. Therefore, there exists $i_0 \in \{1,\cdots,n-1\}$
such that $x_{i_0} \notin D_{k-1}(X[x_{i_0},x_{i_0+1}, \cdots, x_{n}])$, 
i.e. $x_{i_0} \in D_{k}(Y)\setminus D_{k-1}(Y)$ where
$Y=X[x_{i_0},x_{i_0+1}, \cdots, x_{n}]$.
\end{proof}

We remark that any  connected graph
with at most three vertices contains at least one 
apex and therefore
any $X \in D_0\setminus D_{-1}$ 
has at least four vertices. 
We recall that  the
\textsl{clique number} $\omega(X)$ of a graph $X$ 
is the maximum number of vertices  of a clique of $X$.

\begin{prop}\label{prop-existence-d-une-clique}
Given $k\in \N^{\star}$, if $D_k(X)\setminus D_{k-1}(X)
\neq \emptyset$,  then $\omega(X) \geq k+2$.
Moreover, if $x \in D_k(X)\setminus D_{k-1}(X)$, there
is a clique with $k+2$ vertices and containing $x$. 
\end{prop}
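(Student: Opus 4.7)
The plan is to prove the second (stronger) assertion by induction on $k\geq 1$: if $x\in D_k(X)\setminus D_{k-1}(X)$, then $x$ belongs to a clique of $X$ of cardinality $k+2$. Since such a clique has $k+2$ vertices, the inequality $\omega(X)\geq k+2$ follows at once. The inductive engine is driven by unfolding the recursive definition of $k$-dismantlable vertex into a statement about the open neighbourhood $N_X(x)$, combined with Lemma~\ref{lem-DkMaisPasDk-1}.

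For the base case $k=1$, take $x\in D_1(X)\setminus D_0(X)$. By definition this means $N_X(x)\in D_0\setminus D_{-1}$, i.e.\ $N_X(x)$ is $0$-dismantlable but not a cone. A $0$-dismantlable graph is connected, and the single vertex $K_1$ is itself a cone, so $N_X(x)$ must have at least two vertices and therefore contains an edge $\{u,v\}$. Since $u,v\in N_X(x)$, the set $\{x,u,v\}$ is a triangle, i.e.\ a clique of size $3=k+2$ containing $x$.

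For the inductive step $k\geq 2$, assume the statement for $k-1$, and take $x\in D_k(X)\setminus D_{k-1}(X)$. Set $Y:=N_X(x)$. Directly from the definitions, $x\in D_k(X)$ gives $Y\in D_{k-1}$, while $x\notin D_{k-1}(X)$ gives $Y\notin D_{k-2}$; in other words $Y\in D_{k-1}\setminus D_{k-2}$. Applying Lemma~\ref{lem-DkMaisPasDk-1} to the graph $Y$, we obtain an induced subgraph $Z$ of $Y$ and a vertex $z\in D_{k-1}(Z)\setminus D_{k-2}(Z)$. The inductive hypothesis, applied to $Z$ and $z$, produces a clique $K\subseteq Z$ of cardinality $k+1$ containing $z$. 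Because $K\subseteq Z\subseteq Y=N_X(x)$, every vertex of $K$ is adjacent to $x$ in $X$, so $K\cup\{x\}$ is a clique of $X$ with $k+2$ vertices containing $x$, which concludes the induction.

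The only delicate point I foresee is the transition at the inductive step: the hypothesis $x\in D_k(X)\setminus D_{k-1}(X)$ transforms into a statement about the \emph{graph class} ($Y\in D_{k-1}\setminus D_{k-2}$), whereas the inductive hypothesis asks for a vertex in the \emph{vertex-set} difference $D_{k-1}(\,\cdot\,)\setminus D_{k-2}(\,\cdot\,)$. As observed just before the proposition, these two notions do not coincide in general (duplicating twins produces counterexamples), and it is precisely Lemma~\ref{lem-DkMaisPasDk-1} that bridges the gap by supplying a suitable induced subgraph $Z$. Once that bridge is in place, the rest of the argument is immediate because the neighbourhood is a clique in the induced $N_X[x]$ and any clique of $N_X(x)$ extends by $x$.
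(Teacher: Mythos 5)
Your proof is correct and follows essentially the same route as the paper's: induction on $k$, with the base case $k=1$ extracting an edge from the non-cone $0$-dismantlable neighbourhood, and the inductive step using Lemma~\ref{lem-DkMaisPasDk-1} to pass from $N_X(x)\in D_{k-1}\setminus D_{k-2}$ to a vertex of an induced subgraph lying in the vertex-set difference, then adjoining $x$ to the clique supplied by the inductive hypothesis. The only (cosmetic) difference is that you carry the ``containing $x$'' clause through the induction explicitly, whereas the paper proves the clique-number bound and observes afterwards that the same argument yields the stronger statement.
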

\begin{proof}
The proof is by induction on $k$.

For $k=1$, if there exists $x_2 \in D_1(X)
\setminus D_0(X)$,  then $N_X(x_2)\in D_0\setminus C$.
  Since $N_X(x_2)$ is not a
cone but is $0$-dismantlable, it contains an edge, so $X$ contains a triangle.

Now, let $X$ be a graph such that $D_{k+1}(X)\setminus
D_{k}(X)\neq \emptyset$  and denote by $x_{k+2}$ a vertex 
such that $N_X(x_{k+2})\in D_{k}\setminus D_{k-1}$. 
From Lemma \ref{lem-DkMaisPasDk-1}, there exists 
$x_{k+1} \in V(N_X(x_{k+2}))$ and  $Y$ an induced subgraph of
$N_X(x_{k+2})$ such that $x_{k+1}\in D_{k}(Y)\setminus
D_{k-1}(Y)$. The induction hypothesis  applied to $Y$ gives
that $Y$ contains an induced subgraph $K\cong K_{k+2}$. 
As $Y \subset N_X(x_{k+2})$,  $K+x_{k+2}$ is a 
complete subgraph of $X$ with $k+3$ vertices.

And it follows from this proof that any vertex in $D_k(X)\setminus D_{k-1}(X)$ is in 
a clique of $X$ of cardinal $k+2$.
\end{proof}

A direct consequence of  Lemma \ref{lem-DkMaisPasDk-1}
and Proposition \ref{prop-existence-d-une-clique} is:

\begin{cor}\hspace{.5cm}
\newline
\indent (i) Given $k\in \N$, 
if $X \in D_k\setminus D_{k-1}$, 
then $\omega(X) \geq k+2$.

(ii) If $X \in D_{\infty}$ and $\vert V(x)\vert =n$, then 
$X \in D_{n-2}$.
\end{cor}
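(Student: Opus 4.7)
The plan is to derive both parts directly from Lemma \ref{lem-DkMaisPasDk-1} and Proposition \ref{prop-existence-d-une-clique}, after separately treating the base case $k=0$ of (i), since both of those results are stated only for $k\in\N^{\star}$.

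For (i) when $k=0$, I would argue directly from the definitions: if $X\in D_0\setminus D_{-1}$, then $X$ is $0$-dismantlable but not a cone, so $|V(X)|\geq 2$ and $X$ must contain a dominated vertex $x$ with dominator $y\neq x$, giving $xy\in E(X)$ and hence $\omega(X)\geq 2 = k+2$. For (i) when $k\geq 1$, I would apply Lemma \ref{lem-DkMaisPasDk-1} to obtain an induced subgraph $Y$ of $X$ and a vertex $x\in V(Y)$ with $x\in D_k(Y)\setminus D_{k-1}(Y)$. Proposition \ref{prop-existence-d-une-clique} applied to $Y$ then supplies a clique of cardinal $k+2$ inside $Y$, which is \emph{a fortiori} a clique of $X$, so $\omega(X)\geq\omega(Y)\geq k+2$.

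For (ii), since $X\in D_\infty$, the set $\{k\geq -1 \,:\, X\in D_k\}$ is non-empty; let $k_0$ denote its minimum. If $k_0=-1$, then $X$ is a cone and $k_0\leq n-2$ holds trivially because $n\geq 1$. Otherwise $k_0\geq 0$ and, by minimality, $X\in D_{k_0}\setminus D_{k_0-1}$, so part (i) yields $\omega(X)\geq k_0+2$. Combined with the elementary bound $\omega(X)\leq |V(X)|=n$, this forces $k_0\leq n-2$, and Proposition \ref{inclusion} (applied to the chain $D_{k_0}\subset D_{k_0+1}\subset\cdots\subset D_{n-2}$) then gives $X\in D_{n-2}$.

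The argument is essentially a routine assembly of the preceding results; there is no serious obstacle. The one point that needs a little care, which I would flag but not dwell on, is the boundary value $k=0$ in (i), where Lemma \ref{lem-DkMaisPasDk-1} cannot be invoked and the conclusion must instead be read off directly from the definition of $0$-dismantlability.
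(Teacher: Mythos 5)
Your proof is correct and takes essentially the same route as the paper, which states the corollary as a direct consequence of Lemma \ref{lem-DkMaisPasDk-1} and Proposition \ref{prop-existence-d-une-clique} without spelling out the assembly. Your separate elementary treatment of the boundary case $k=0$ in (i) (where those two results do not apply) and the minimality argument for (ii) are exactly the intended completion.
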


Thus, if a graph of $D_{\infty}$ contains no triangle, 
it is in $D_0$ and it is not hard to prove by induction 
that a $0$-dismantlable graph without a triangle is a tree. 
So, the only graphs of $D_{\infty}$  
without a triangle are the trees.
A more directed proof of 
this fact is obtained by considering the clique complexes.
Indeed, if a graph $X$ is triangle-free, then \cl(X) is a 
$1$-dimensional complex, and if $X$ is in $D_{\infty}$, 
Proposition \ref{prop_link} implies that \cl(X)  is $k$-collapsible.
Thus, \cl(X) has to be a tree and so is $X$.  

\section{Some results on $D_{\infty}$}
\subsection{Order in dismantlabilities}
For $0$-dismantlability, the order of dismantlings does not 
matter and therefore the $0$-stiff graphs to which a graph $X$ 
is $0$-dismantlable are isomorphic  (\cite[Proposition 2.3]{fl},
\cite[Proposition 2.60]{hell-nesetril}). 
This property is no longer true for $k$-dismantlability 
with $k\geq 1$. The graph $X$ of Fig. \ref{figure_stiff}   gives a simple 
example of a graph that is
$1$-dismantlable either to $C_4$, 
or to $C_5$  (depending 
on the choice and order of the vertices to
$1$-dismantlable)
which are non-isomorphic $1$-stiff graphs.

\begin{figure}[ht]
\begin{center}
\includegraphics[width=3cm]{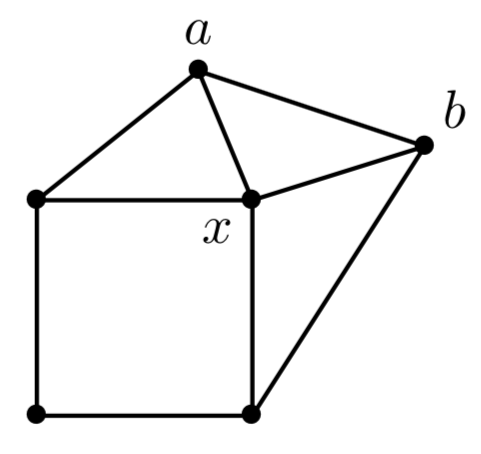}
\caption{$X \rd 1 X -a-b\cong C_4$ and $X \rd 1 X -x \cong C_5$ }
\label{figure_stiff}
\end{center}
\end{figure}

\noindent Actually, there is an important gap between
$0$-dismantlability and $k$-dismantlability with $k\geq 1$. 
We have already noted in Proposition \ref{prop-Xk-egale-X1}
that, for any graph $X$ and any $k\geq 1$,
$[X]_k=[X]_1$ while the inclusion $[X]_0\subset [X]_1$
of homotopy classes is strict:
\begin{itemize}
    \item $[C_4]_0\neq [C_5]_0$ because the cycles $C_4$ and $C_5$
    are non isomorphic 0-stiff graphs. 
    \item $[C_4]_1=[C_5]_1$ as it is shown by graph $X$
    in Fig. \ref{figure_stiff}.
\end{itemize}

\begin{figure}[!h]
\begin{center}
\includegraphics[width=13cm]{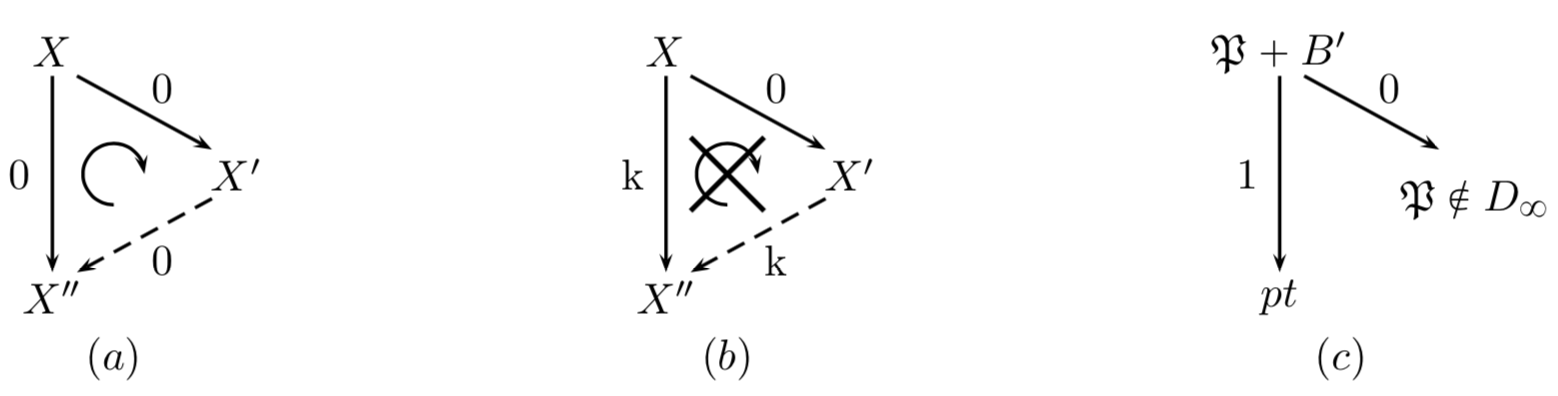}
\caption{With $X''$ an induced subgraph of $X'$
and $X'$ an induced subgraph of $X$: $(a)$ the dashed arrow always exists, $(b)$ the dashed arrow may exist or not, $(c)$ an illustration of $(b)$ where the dashed arrow does not exist.}
\label{commutative-or-not-triangles}
\end{center}
\end{figure}

A major fact concerning the difference between 0-dismantlability
and $k$-dismantlability for $k\geq 1$ is
that  (\cite[Corollary 2.1]{fl})
$$(X\rd 0 X'' \;\; 
, \;\; X \rd 0 X' \;\; \text{and} \;\;X''\subset X')  ~~ 
\Longrightarrow ~~ X' \rd 0 X''$$
while, for $k\geq 1$, in general 
(cf. Fig. \ref{commutative-or-not-triangles}, $(b)$ and $(c)$):
$$(X\rd k X'' \;\; 
, \;\; X \rd 0 X' \;\; \text{and} \;\;X''\subset X') ~~ 
\diagup \hspace{-.45cm}\Longrightarrow ~~ X' \rd k X''.$$
Actually, one can find graphs $X$, $X'$ and $X''$ such that $X'' \subset X' \subset X$, $X\rd 1 X''$,  
$X \rd 0 X'$ and  $X'$ is not $k$-dismantlable to $X''$ for any integer $k\geq 0$. 
To prove this, we introduce the Parasol graph: 
\begin{definition}[Parasol graph]
The \emph{Parasol}
is the graph  $\parasol$ with $15$ 
vertices drawn in Fig.  \ref{parasol}. From $\parasol$, 
we build a graph $\parasol+B'$ by adding to  $\parasol$ 
a vertex $B'$ linked to $B_1$ and to the neighbours of 
$B_1$ except $B_3$ and $B_6$. 
\end{definition}

\noindent The neighbours of the vertices 
of $\parasol$ are as follows, 
for all $i\in \{1,\cdots,n\}$:
\begin{itemize}
\item $N_{\parasol}(B_i)$ is isomorphic 
to $C_4$ with two disjoint pendant edges attached 
to two consecutive vertices of the cycle
\item $N_{\parasol}(A_i) \cong C_5$
\item $N_{\parasol}(I) \cong C_7$
\end{itemize}

\noindent Consequently, $D_{k}(\parasol)=\emptyset$ for all positive integer $k$ and:

\begin{figure}[!h]
\centering
\includegraphics[width=15cm]{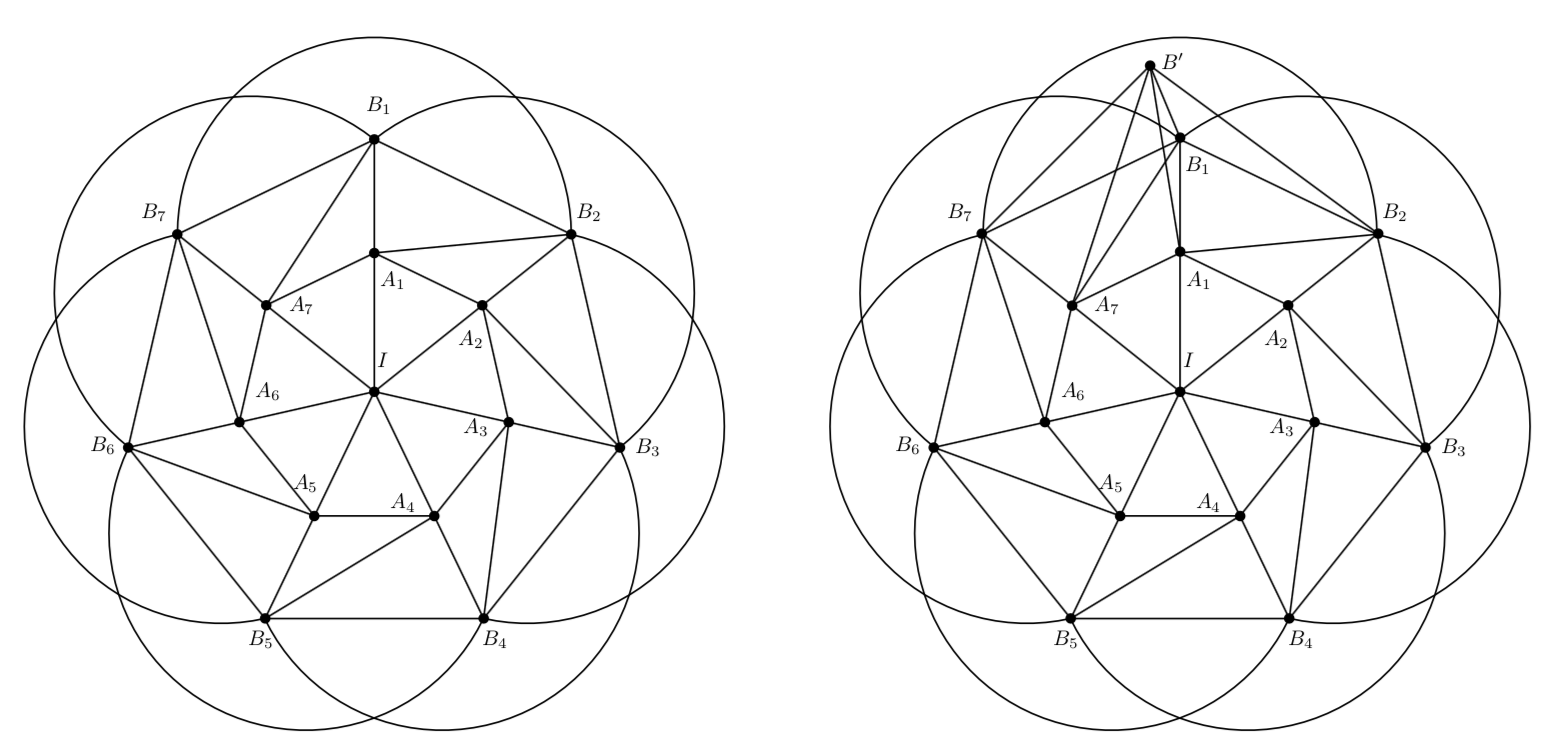}
\caption{(left) the parasol
    graph $\parasol$ and (right) the graph $\parasol+B'$. }
    \label{parasol}
\end{figure}

\begin{prop}\hspace{1cm}\label{prop-parasol}
\smallbreak
(i) $\parasol \notin D_{\infty}$.
\smallbreak

(ii) $\parasol+B' \rd 0 \parasol$.
\smallbreak

(iii) $\parasol+B' \rd 1 (\parasol+B')-B_1 \rd 1 pt$.
\end{prop}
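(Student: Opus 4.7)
For part (i), I would show that no vertex of $\parasol$ lies in any $D_k(\parasol)$, whence (since $\parasol$ has more than one vertex) $\parasol \notin D_{\infty}$. Examining each type of vertex neighborhood: $N_\parasol(B_i)$ is $C_4$ with two pendant edges, from which the two degree-one pendants are $0$-dismantlable, leaving $C_4$ whose clique complex is non-contractible; and $N_\parasol(A_i)=C_5$, $N_\parasol(I)=C_7$ are cycles of length $\geq 4$ whose clique complexes (themselves, by absence of triangles) are also non-contractible. In each case Lemma \ref{lemme-clef} yields $N_\parasol(v)\notin D_\infty$; in particular no such neighborhood is a cone, and none is $(k-1)$-dismantlable for any $k\geq 1$. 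Hence $D_k(\parasol)=\emptyset$ for every $k\geq 0$ and $\parasol\notin D_\infty$.

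For part (ii), the claim is immediate from the definition of $B'$: one has
\[ N_{\parasol+B'}[B']=\{B',B_1\}\cup\bigl(N_\parasol(B_1)\setminus\{B_3,B_6\}\bigr)\subset \{B_1\}\cup N_\parasol(B_1)\cup\{B'\}=N_{\parasol+B'}[B_1], \]
so $B'$ is dominated by $B_1$, giving the one-step $0$-dismantling $\parasol+B'\rd 0\parasol$.

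For part (iii) I proceed in two sub-steps. First, I would verify that $B_1$ is $1$-dismantlable in $\parasol+B'$ by showing that $N_{\parasol+B'}(B_1)=N_\parasol(B_1)\cup\{B'\}$ is $0$-dismantlable: the two pendants $B_3$ and $B_6$ of the $C_4$-with-pendants structure of $N_\parasol(B_1)$ retain degree one in this induced subgraph and are thus $0$-dismantlable, and after their removal one is left with the wheel $W_4$ whose apex is $B'$, a cone and therefore trivially $0$-dismantlable. Second, I would exhibit an explicit $1$-dismantling sequence for $(\parasol+B')-B_1$ down to a single vertex: deleting $B_1$ converts the cyclic (resp.\ $C_4$-with-pendants) neighborhoods of vertices previously adjacent to $B_1$ into path-like subgraphs, and the remaining edges incident to $B'$ provide the additional dominations needed to make several of these neighborhoods $0$-dismantlable; starting from such a vertex and iterating, $(\parasol+B')-B_1$ collapses to a point by interleaved $1$-dismantlings and strong collapses.

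The main obstacle is precisely the second sub-step of part (iii): producing and verifying a concrete $1$-dismantling sequence requires tracking at each stage how the successive deletions modify the neighborhoods of the remaining vertices, using the specific geometry of $\parasol$ and the edges contributed by $B'$. Part (i) reduces to a direct invocation of Lemma \ref{lemme-clef}, and part (ii) to a one-line dominance check, so the bulk of the combinatorial work is concentrated in that final explicit dismantling argument.
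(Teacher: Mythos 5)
Your proposal follows the paper's proof essentially step for step: part (i) is the same application of Lemma \ref{lemme-clef} to the three neighbourhood types (each open neighbourhood is a cycle of length at least $5$ or $0$-dismantles to $C_4$, hence is not in $D_{\infty}$, so $D_k(\parasol)=\emptyset$ for all $k$); part (ii) is the same one-line domination $B'\dom B_1$; and your verification that $N_{\parasol+B'}(B_1)$ is $0$-dismantlable (delete the two pendants $B_3$ and $B_6$, leaving a $4$-wheel with apex $B'$) is exactly the first half of the paper's argument for (iii). The one piece you leave as a sketch --- the $1$-dismantling of $(\parasol+B')-B_1$ to a point --- is resolved in the paper precisely along the lines you anticipate: the remaining $B_i$ are deleted successively in increasing order of the index $i$, each being $1$-dismantlable because its neighbourhood at that stage is a path, and the residual graph induced by $I$ and the vertices $A_i$ is a cone (with apex $I$) and hence $0$-dismantlable. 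Nothing in your outline would fail, but a complete proof of (iii) does require writing down this sequence and checking the path condition at each stage, as you yourself note.
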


\begin{proof}
(i) $\parasol$ is not  in $ D_{\infty}$ by application of Lemma 
\ref{lemme-clef} because each vertex has a neighbour which is 
a cycle of length at least 5 or which is 0-dismantlable to a cycle
of length 4.

(ii) As $B' \dom B_1$ in $\parasol+B'$, the vertex  $B'$ is $0$-dismantlable in $\parasol+B'$ and $\parasol+B' \rd 0 \parasol \notin D_{\infty}$.

(iii) It is easy to verify that the neighbourhood of $B_1$ in $\parasol +B'$
is a 0-dismantlable graph; so, 
$B_1 \in D_1(\parasol+B')$ and $\parasol+B' \rd 1 (\parasol+B')-B_1$.
Then, following the increasing order of the indexes $i$, all the $B_i$ are successively $1$-dismantlable with
a path as neighbourhood.
The remaining graph induced by $I$ and the 
vertices $A_i$ is a cone and thus $0$-dismantlable. 
\end{proof}

\noindent This example shows 
that, for graphs in $D_k$ with $k\geq1$,
the  dismantling order is crucial: it is possible to reach (resp. quit) 
$D_{\infty}$ just by 
adding (resp. removing) a 0-dismantlable vertex (Fig. \ref{commutative-or-not-triangles}, $(c)$).

The parasol graph is not in $D_{\infty}$ 
but it is worth noting that the parasol graph is
ws-dismantlable:
$$ \parasol \rd{ws} pt$$
Let us recall (cf. \cite{bfj}) that ws-dismantbility allows not only
1-dismantlability of vertices but also
of edges (an edge $\{a,b\}$ of a graph $X$ is 1-dismantlable 
whenever $N_X(a)\cap N_X(b)\in D_0$)\footnote{In \cite{bfj}, 
\textsl{1-dismantlable edge} was
called \textsl{$s$-dismantlable}.}. 
For example, in the parasol graph, one can 1-delete
 the edge  $\{B_2, B_7\}$
and  the remaining graph is 1-dismantlable
(beginning by $B_1$).
It is well known (\cite[Lemma 3.4]{cyy},\cite[Lemma 1.6]{bfj})
that the 1-dismantlability of an edge
can be obtained by the 0-addition of a 
vertex followed by the 1-deletion of another vertex.
As an illustration, the sequence
$$ \parasol \ep 0 \parasol + B' 
\rd 1 (\parasol +B') -B_1$$
can be seen as 1-deletions of the edges 
$\{B_1, B_6\}$ and $\{B_1, B_3\}$.

Thanks to the switching property $(\dag)$ which 
allows to switch $0$-expansions and 
$1$-dismantlabilities, we get:
$$ X \text{ ws-dismantlable} \Rightarrow \exists W\,\, 
\text{such that} \,\, X \ep 0 W \rd 1 pt.$$
The question of which other graphs this property 
extends to is open. More precisely, while any graph $X$ 
for which there exists $W$ such that $X \ep 0 W \rd 1 pt$ 
has a contractible clique complex $\cl(X)$, the reverse 
implication 
remains open: if $X$ is a graph such that $\cl(X)$ 
is contractible, does it exist $W$ such that $X \ep 0 W \rd 1 pt$ ?
 The graphs $DH$ (Fig. \ref{DH-figure}) 
and $BH$ (Fig. \ref{BH-figure} and \cite[Fig. 3, Fig. 4]{cyy}), 
1-skeletons of triangulations of the Dunce Hat 
and the Bing's House respectively, are interesting cases. 
Indeed, $\cl(DH)$ and $\cl(BH)$ are known to be 
contractible but non collapsible and this implies that
both graphs are neither in $D_{\infty}$ nor in the set 
of ws-dismantlable graphs. However:

\begin{prop}\label{BH_DH}
There exist two graphs $W_{DH}$ and $W_{BH}$ such that: 
$$DH \ep 0 W_{DH} \rd 1 pt \,\,\,\, \text{and} \,\,\,\, BH \ep 0 W_{BH} \rd 1 pt   $$
\end{prop}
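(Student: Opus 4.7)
The plan is to prove both parts of Proposition \ref{BH_DH} constructively, by exhibiting explicit enlargements $W_{DH}$ and $W_{BH}$ obtained from $DH$ and $BH$ via finite sequences of $0$-additions (each step adds a dominated vertex), and then writing down $1$-dismantling sequences from these enlargements down to a single vertex. Since $\cl(DH)$ and $\cl(BH)$ are both known to be contractible but non-collapsible, no part of the argument can be ``off the shelf'': each step must be tailored to the specific triangulations at hand.

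The guiding heuristic comes from the parasol example (Proposition \ref{prop-parasol}): there, a single $0$-addition of a vertex $y$ dominated by some $v\in X$ enlarges $N_X(v)$ just enough to make it $0$-dismantlable, so that $v$ becomes $1$-dismantlable and triggers a cascade. Concretely, for each $X\in\{DH,BH\}$ I would inspect the vertices and locate one (or a few) whose open neighbourhood is close to being $0$-dismantlable but fails because of a short induced cycle (typically a $C_4$ or $C_5$). I would then add a vertex $y$ dominated by a well-chosen vertex $v$ so that, in the enlarged graph, the obstruction disappears either in $N(v)$ itself or in the neighbourhood of a vertex that the $1$-dismantling of $v$ later exposes. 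This search may have to be iterated, adding several dominated vertices in turn.

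Verification then splits into two parts: (i) for every added vertex $y$, check directly that $y$ is dominated in the graph at the moment of its addition, so that the enlargement is a legitimate $0$-expansion; (ii) exhibit a $1$-dismantling sequence $y_1,y_2,\ldots,y_m$ of the enlarged graph and check at each stage that the open neighbourhood of $y_i$ in the current induced subgraph is $0$-dismantlable. The switching property $(\dag)$ is useful here, since it allows $0$-additions and subsequent $1$-dismantlings to be reorganised and factored into smaller pieces, simplifying the bookkeeping.

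The main obstacle is combinatorial rather than conceptual: because of the classical Bing/Zeeman-type obstructions to collapsibility in $\cl(DH)$ and $\cl(BH)$, most naive $0$-additions will not work, and the right enlargements must be found by case analysis on the specific triangulations. Once $W_{DH}$ and $W_{BH}$ are explicitly written down, the rest is a finite but tedious verification, with no shortcut from the general theory. I expect $W_{DH}$ to require only a handful of added vertices (since $DH$ can be triangulated with few vertices), while $W_{BH}$ will need more, given its larger size and its two interior cells.
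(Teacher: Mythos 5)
Your overall strategy is the right one and coincides with the paper's: identify the vertices whose open neighbourhoods fail to be $0$-dismantlable because of induced cycles (in $DH$ these are the vertices $1,2,3,4$ carrying the gluing data of the Dunce Hat), add dominated vertices to turn those cycles into unions of wheels, $1$-delete the repaired vertices, and finally invoke the switching property $(\dag)$ to commute all the $0$-additions to the front so that the result reads $DH \ep 0 W_{DH} \rd 1 pt$. However, what you have written is a plan for finding a proof, not a proof. The statement is a pure existence claim whose entire content is the explicit exhibition of $W_{DH}$ and $W_{BH}$ together with the verification that each added vertex is dominated at the moment of its addition and that each deleted vertex has a $0$-dismantlable open neighbourhood at the moment of its deletion. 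You defer precisely this to a future ``case analysis on the specific triangulations'' and a ``finite but tedious verification'', so nothing is actually established: you have not named a single added vertex, specified a single neighbourhood, or written down a single dismantling sequence. Since $\cl(DH)$ and $\cl(BH)$ are contractible but non-collapsible, no soft argument can substitute for this data, and your own text concedes that most candidate $0$-additions fail.

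For comparison, the paper's proof consists exactly of the missing data. For the Dunce Hat it adds $1'$ and $1''$ adjacent to $\{1,2,j,i,h\}$ and $\{1,3,d,e,f\}$ respectively (both dominated by $1$), observes that $N_{DH+1'+1''}(1)$ becomes two $4$-wheels joined by a path and hence lies in $D_0$, $1$-deletes $1$, repeats with $2'$ and $3'$ for the vertices $2$ and $3$, and then checks that the remaining graph $1$-dismantles to a $12$-wheel; this yields $W_{DH}=DH+1'+1''+2'+3'$. The Bing's House requires eight added vertices and an analogous, longer verification given in the appendix. Until you supply constructions and checks of this kind, the proposition is not proved.
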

\begin{proof}
For both graphs, the process is the same and consists in 
successive $0$-additions of vertices so as to transform 
non-$0$-dismantlable neighbourhoods of some vertices into 
$0$-dismantlable ones. For these two graphs $DH$ and $BH$, 
we will transform some cycles into wheels by 0-additions of
vertices. Here we give the sequence of $0$-additions and 
$1$-deletions only for the Dunce Hat and the details 
for the Bing's House are given in Appendix.  
With notations of Fig. \ref{DH-figure}, we do the  
following $0$-additions and $1$-deletions with $G_0=DH$:
\begin{enumerate}
\item Within $N_{G_0}(1)$: $0$-additions of vertices 
$1'$ and $1''$  linked to $1,2,j,i,h$ 
and $1,3,d,e,f$ respectively. Note that $1' \dom 1$ and $1'' \dom 1$. 
Now, since $N_{DH+1'+1''}(1)$ is made of two $4$-wheels 
linked by a path, so is a $0$-dismantlable graph, 
we $1$-delete vertex $1$. Let us note $G_1=DH+1'+1"-1$.
\item Within $N_{G_1}(2)$: $0$-addition of vertex $2'$ 
linked to $2,1',h,4,j$ and $1$-deletion of vertex $2$. 
Let us note $G_2=G_1+2'-2$.
\item Within $N_{G_2}(3)$: $0$-addition of vertex $3'$ 
linked to $3,1'',d,4,f$ and $1$-deletion of vertex $3$. 
Let us note $G_3=G_2+3'-3$.
\smallbreak          
\noindent Now, observe that $G_3 \in D_1$. Indeed, 
\smallbreak 
\item Vertex $4$ is in $D_1(G_3)$ since its 
neighbourhood is the path $bcd3'fgh2'jkl$. 
Let us note $G_4=G_3-4$, vertices $2'$ and $3'$ 
are in $D_1(G_4)$ since $N_{G_4}(2')=h1'j$ and $N_{G_4}(3')=d1''f$ 
are disjoint $2$-paths. Let us note $G_5=G_4-2'-3'$, vertices $1'$ 
and $1''$ are in $D_1(G_5)$ since $N_{G_5}(1')$ and $N_{G_5}(1')$ 
are also disjoint $2$-paths.
\item  Now the resulting graph $G_5-1'-1''$ is a $12$-wheel 
centered in $z$. Like any cone, it is $0$-dismantlable. 
\end{enumerate}  
The switching property $(\dag)$ finishes the proof
and $W_{DH}=DH+1'+1''+2'+3'$.
\end{proof}
\begin{remark}
The strategy used in the previous proof is based on the removal of the vertices $1, 2, 3$ and $4$ corresponding to the gluing data of the Dunce Hat in order to get the $0$-dismantlable $12$-wheel centered in $z$. However, to get a $1$-dismantlable graph, the $0$-additions of vertices $1'$ and $1''$ are enough, as shown by the $1$-dismantling sequence $1, a, b, c, d, e, f, g, h, i$, $j, k, z, l, 1', 2, 4, 3, 1''$ of $DH+1'+1''$ which alternates $0$- and $1$-dismantlings.
\end{remark}


\begin{figure}
\includegraphics[width=16cm]{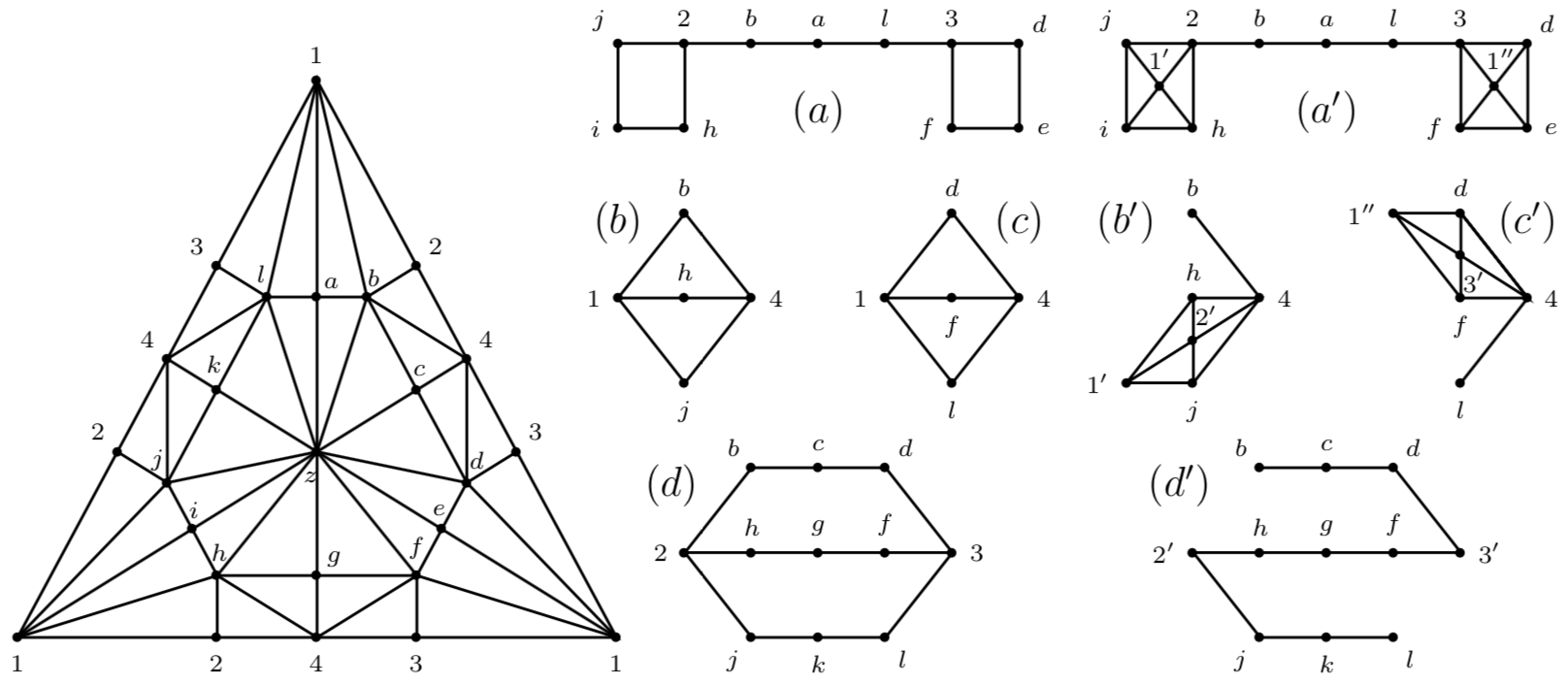}
\caption{(left) The graph $DH$, the 1-skeleton of a 
triangulation of the Dunce Hat. (middle) (a), (b), (c) and (d) 
are the neighbourhoods of vertices $1$, $2$, $3$ and $4$ in $G_0=DH$, 
respectively. (right) (a') neighbourhood of vertex $1$ 
in $G_0+1'+1''$ (b') neighbourhood of vertex $2$ in $G_1+2'$ 
with $G_1=G_0+1'+1''-1$ (c') neighbourhood of vertex $3$ 
in $G_2+3'$ with $G_2=G_1+2'-2$ (d') neighbourhood of 
vertex $4$ in $G_3$ with $G_3=G_2+3'-3$.}
\label{DH-figure}
\end{figure}

\subsection{A link with graph derivability}

In \cite{mazur}, Mazurkiewicz introduces the following notion
of \textsl{locally derivable graphs}.
For any family  $\mathcal{R}$ of non-empty graphs,
$\bigtriangleup(\mathcal{R})$ is the smallest family 
of graphs containing the point graph $pt$ and such that 
$$\left( X-x \in \bigtriangleup(\mathcal{R}) \;\; 
\text{and} \;\; N_X(x)\in \mathcal{R} \right) \Rightarrow X 
\in \bigtriangleup(\mathcal{R}). $$
Graphs in $\bigtriangleup(\mathcal{R})$ are called 
\textsl{locally derivable by $\mathcal{R}$}.
By definition, the graphs of
$\bigtriangleup(\mathcal{R})$ are non-empty 
and connected graphs and $\bigtriangleup$ is monotone:
$\mathcal{R} \subset \mathcal{R'}$
implies $\bigtriangleup(\mathcal{R})\subset 
\bigtriangleup(\mathcal{R}')$. 
By an inductive proof on the cardinals of the vertex sets, it is easy
to see that $D_0=\bigtriangleup(C)$ and more generally
(recall that $C$ is also denoted $D_{-1}$):
\begin{prop}\label{derivability1}
For all $k\in \N$,  $D_{k}$ 
is locally derivable by $D_{k-1}$, i.e. 
$\bigtriangleup(D_{k-1})=D_{k}$.
\end{prop}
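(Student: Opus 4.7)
The plan is to establish the two inclusions $\bigtriangleup(D_{k-1}) \subset D_k$ and $D_k \subset \bigtriangleup(D_{k-1})$ separately, both by a short induction. The key conceptual point is that the defining property of $\bigtriangleup$ uses the neighbourhood $N_X(x)$ of the \emph{added vertex in the larger graph}, which matches exactly the recursive definition of a $k$-dismantlable vertex: a vertex $x$ of $X$ is $k$-dismantlable iff $N_X(x) \in D_{k-1}$. Once this alignment is noted, both directions are essentially bookkeeping.

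For the inclusion $\bigtriangleup(D_{k-1}) \subset D_k$, I would argue by induction following the inductive construction of $\bigtriangleup(D_{k-1})$. The base case is immediate since $pt \in D_k$. For the inductive step, suppose $X-x \in D_k$ and $N_X(x) \in D_{k-1}$. By the very definition of a $k$-dismantlable vertex, $x \in D_k(X)$, so $X \rd{k} X-x$, and concatenating with any $k$-dismantling sequence of $X-x$ down to a point yields $X \rd{k} pt$, that is, $X \in D_k$. Since $\bigtriangleup(D_{k-1})$ is the \emph{smallest} such family and $D_k$ satisfies its defining closure property, we conclude $\bigtriangleup(D_{k-1}) \subset D_k$.

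For the reverse inclusion $D_k \subset \bigtriangleup(D_{k-1})$, I would induct on $n = |V(X)|$. The base $n=1$ is trivial. For $n > 1$, let $X \in D_k$ and fix a $k$-dismantling sequence $x_1, x_2, \ldots, x_{n-1}$. Then $x_1 \in D_k(X)$ gives $N_X(x_1) \in D_{k-1}$, while the tail sequence $x_2, \ldots, x_{n-1}$ shows $X - x_1 \in D_k$. Since $|V(X - x_1)| = n-1$, the induction hypothesis yields $X - x_1 \in \bigtriangleup(D_{k-1})$, and the closure property of $\bigtriangleup(D_{k-1})$ then gives $X \in \bigtriangleup(D_{k-1})$.

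There is no real obstacle here; the only thing to be careful about is not to confuse $N_X(x)$ with the neighbourhood of $x$ in some intermediate graph appearing in a dismantling sequence. In the first inclusion we remove $x$ first (before any other deletions), so this point never arises; in the second inclusion we use that the very first vertex $x_1$ of a dismantling sequence of $X$ is $k$-dismantlable in $X$ itself, so $N_X(x_1) \in D_{k-1}$ holds by definition. Both inductions close cleanly and the proof fits in a few lines.
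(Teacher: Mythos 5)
Your proof is correct and matches the paper's approach: the paper gives no detailed argument, merely noting that the identity follows "by an inductive proof on the cardinals of the vertex sets," which is exactly what your reverse inclusion does, while your forward inclusion via the minimality of $\bigtriangleup(D_{k-1})$ is the natural companion step. The key observation you isolate --- that the closure rule defining $\bigtriangleup$ uses $N_X(x)$ in the larger graph, matching the definition of a $k$-dismantlable vertex --- is precisely why the paper considers the statement immediate.
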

It is worth noting the following fact:
\begin{prop}
$D_{\infty}$ is the smallest fixed point of $\bigtriangleup$.
\end{prop}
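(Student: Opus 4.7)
The plan is to verify two things: that $D_{\infty}$ is itself a fixed point of $\bigtriangleup$, and that it is contained in every fixed point. Both halves go by induction on the number of vertices, and the earlier Proposition \ref{derivability1} (with monotonicity of $\bigtriangleup$) will do most of the bookkeeping.

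For the first half, the inclusion $D_{\infty}\subset\bigtriangleup(D_{\infty})$ is immediate: by Proposition \ref{derivability1}, $D_k=\bigtriangleup(D_{k-1})\subset\bigtriangleup(D_\infty)$ for every $k\geq 0$, so $D_\infty=\bigcup_k D_k\subset\bigtriangleup(D_\infty)$. For the reverse inclusion $\bigtriangleup(D_\infty)\subset D_\infty$, I would induct on $|V(X)|$ for $X\in\bigtriangleup(D_\infty)$. The point $pt$ is in $D_0\subset D_\infty$. Otherwise there is a vertex $x$ with $X-x\in\bigtriangleup(D_\infty)$ and $N_X(x)\in D_\infty$. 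By induction $X-x\in D_\infty$, so $X-x\in D_k$ for some $k$, and $N_X(x)\in D_j$ for some $j$. Taking $m=\max(k,j+1)$ and using Proposition \ref{inclusion} to absorb everything into $D_m$, the vertex $x$ is $m$-dismantlable in $X$ and $X-x\in D_m$, hence $X\in D_m\subset D_\infty$.

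For the minimality, let $\mathcal{R}$ be any fixed point of $\bigtriangleup$; I want to show $D_\infty\subset\mathcal{R}$. Again by induction on $|V(X)|$ for $X\in D_\infty$. The base case $X=pt$ is fine since $pt\in\bigtriangleup(\mathcal{R})=\mathcal{R}$. Otherwise $X\in D_k$ for some $k\geq 0$, so $X$ has a $k$-dismantlable vertex $x$, and one has both $X-x\in D_k\subset D_\infty$ and $N_X(x)\in D_{k-1}$. The only subtle point is that when $k=0$, $N_X(x)$ is merely a cone, i.e.\ in $D_{-1}$; but cones belong to $D_0\subset D_\infty$, so in every case $N_X(x)\in D_\infty$. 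Both $X-x$ and $N_X(x)$ have strictly fewer vertices than $X$, so the inductive hypothesis places them in $\mathcal{R}$, and the defining closure property of $\bigtriangleup(\mathcal{R})=\mathcal{R}$ yields $X\in\mathcal{R}$.

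The only place where care is needed is the convention about cones at the bottom of the hierarchy: we must not forget that $D_{-1}=C$ is reabsorbed into $D_0$, otherwise the inductive step at $k=0$ would not give $N_X(x)\in D_\infty$ and we could not apply the induction hypothesis to $N_X(x)$. Once that observation is in place, both inclusions come out of the definitions and Proposition \ref{derivability1} with no further machinery.
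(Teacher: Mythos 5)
Your proof is correct, but it takes a genuinely different route from the paper's. The paper works with the iterates of the operator, setting $\bigtriangleup^{\star}(\mathcal{R})=\bigcup_{n\geq 0}\bigtriangleup^{n}(\mathcal{R})$: it observes that every fixed point $\mathcal{F}$ satisfies $\mathcal{F}=\bigtriangleup^{\star}(\mathcal{F})$, deduces from monotonicity that $\bigtriangleup^{\star}(\{pt\})$ is contained in every fixed point, and then identifies $D_{\infty}$ with $\bigtriangleup^{\star}(\{pt\})$ by combining $D_{\infty}=\bigtriangleup^{\star}(C)$ (Proposition \ref{derivability1}) with the observation that a cone on $n$ vertices lies in $\bigtriangleup^{n-1}(\{pt\})$. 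You instead argue directly by induction on the number of vertices, once to get $\bigtriangleup(D_{\infty})\subset D_{\infty}$ (absorbing $X-x\in D_k$ and $N_X(x)\in D_j$ into the common level $D_{\max(k,j+1)}$ via Proposition \ref{inclusion}) and once to get $D_{\infty}\subset\mathcal{R}$ for an arbitrary fixed point $\mathcal{R}$. Your version is more elementary and has the merit of explicitly verifying that $D_{\infty}$ is itself a fixed point --- a step the paper leaves implicit when it passes from ``$\bigtriangleup^{\star}(\{pt\})$ is contained in every fixed point'' to ``$\bigtriangleup^{\star}(\{pt\})$ is the smallest fixed point''; the paper's version, in exchange, exhibits $D_{\infty}$ as the iteration of $\bigtriangleup$ starting from the point, which ties in directly with Mazurkiewicz's formalism. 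Both arguments rest on the inversion principle for the inductively defined family $\bigtriangleup(\mathcal{R})$ (every non-point member admits a last derived vertex), and both need the remark you rightly single out, namely that $D_{-1}=C$ is reabsorbed into $D_0\subset D_{\infty}$.
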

\begin{proof}
Following the notations of \cite{mazur} and given 
a family $\mathcal{R}$ of graphs, we denote by
$\bigtriangleup^{\star}(\mathcal{R})$ the set 
$\bigcup_{n\geq 0}\bigtriangleup^n(\mathcal{R})$ with the convention 
 $\bigtriangleup^0(\mathcal{R})=\{pt\}$ and
$\bigtriangleup^1(\mathcal{R})=\bigtriangleup(\mathcal{R})$. 
Let us note that, if $\mathcal{F}$ is a fixed family by $\bigtriangleup$, then $\mathcal{F}=\bigtriangleup^{\star}(\mathcal{F})$.
Now, for any family $\mathcal{R}$, the inclusion 
$\{pt\} \subset \bigtriangleup(\mathcal{R})$ 
and the monotony of $\bigtriangleup$ imply  
$\bigtriangleup^{\star}(\{pt\}) \subset 
\bigtriangleup^{\star}(\mathcal{R})$.
Consequently, $\bigtriangleup^{\star}(\{pt\})$
is the smallest fixed point of $\bigtriangleup$
and we have to prove that $D_{\infty}=
\bigtriangleup^{\star}(\{pt\})$.

From Proposition \ref{derivability1}, 
$D_{\infty}=\bigtriangleup^{\star}(C)$ 
and $\bigtriangleup^{\star}(\{pt\}) \subset D_{\infty}$.  
For the reverse inclusion, 
by induction on $n$, observe that  
any cone with $n$ vertices is in $\bigtriangleup^{n-1}(\{pt\})$. 
Consequently, $C\subset \bigtriangleup^{\star}(\{pt\})$
and so $D_{\infty}=\bigtriangleup^{\star}(C)\subset \bigtriangleup^{\star}(\{pt\})$. 
\end{proof}

In \cite{mazur}, the set $D_{\infty}$ is denoted by $F$ 
(and the elements of $D_{\infty}=F$  
are called \emph{closed graphs}) 
and the author states that for any $X \in F$, if  $x\in V(X)$ 
with $N_X(x)\in F$, then $X-x \in F$. 
The graph $\parasol+B'$ is a counter-example. Indeed, 
$\parasol \not \in D_{\infty}$ while, by Proposition \ref{prop-parasol}, 
$\parasol+B' \in D_1 \subset D_{\infty}$ and 
$N_{\parasol+B'}(B') \in D_{\infty}$ because  it is a cone with apex $B_1$.


\section{Vertex-transitive graphs, $k$-dismantlability 
and evasivity}
The relation $\cE$ defined on the set $V(X)$ of vertices of a  graph $X$ by 
$x \:\cE\: y \Longleftrightarrow N_X[x]=N_X[y]$ is an equivalence relation 
whose equivalence classes  are maximal sets of twin vertices. 
With notations of \cite{sab61}, we denote by $X_\star$ the graph obtained 
from this equivalence relation: $V(X_\star)$ is the set of equivalence 
classes of $\cE$ with adjacencies $x_\star \sim y_\star$ if, and only if, 
$x\sim y$. 

\begin{prop}\cite[Lemma 6.4]{sab61} \label{sab61} Let $X$ be a graph.
\vspace{1 mm}

(i) There is a subgraph of $X$ isomorphic to $X_\star$.
\vspace{1 mm}

(ii) $(X_\star)_\star=X_\star$ (i.e., ${\rm Twins}(X_\star)=\emptyset$).
\vspace{1 mm}

(iii) $X\cong X_\star$ if, and only if, $x_\star=\{x\}$
for every vertex $x$ of $X$.
\vspace{1 mm}

(iv) If $X\cong Y$, then $X_\star \cong Y_\star$.
\end{prop}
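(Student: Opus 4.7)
The plan is to exploit the elementary observation that if $x\neq y$ are twins in $X$, then $x\in N_X[x]=N_X[y]$ forces $x\sim y$, so twins are mutually adjacent. Consequently each equivalence class $x_\star$ induces a clique of $X$, and the adjacency relation of $X_\star$ is well-defined precisely because any $x'\in x_\star$ satisfies $N_X[x']=N_X[x]$: membership of $z$ in $N_X[x]$ depends on $z$ only through its class $z_\star$.

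For (i), I would pick a system of representatives $R\subset V(X)$, one vertex from each $\cE$-class, and check that the map $r\mapsto r_\star$ is a graph isomorphism $X[R]\to X_\star$. The map is bijective by the choice of $R$, and adjacency is preserved and reflected by the very definition of $X_\star$.

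The core argument is (ii). I would reason by contradiction: suppose $x_\star\neq y_\star$ are twins in $X_\star$, so $N_{X_\star}[x_\star]=N_{X_\star}[y_\star]$. The key lemma is the decomposition
\[
N_X[x] \;=\; \bigcup_{u_\star\in N_{X_\star}[x_\star]}u_\star,
\]
which follows from the initial observation: a vertex $z$ lies in $N_X[x]$ iff $z=x$ (so $z_\star=x_\star\in N_{X_\star}[x_\star]$) or $z\sim x$ (so $z_\star\sim x_\star$), and conversely any vertex of a class adjacent to $x_\star$ in $X_\star$ is adjacent to $x$ in $X$ since twins share closed neighbourhoods. Applying this decomposition to both $x$ and $y$ and using the hypothesis gives $N_X[x]=N_X[y]$, contradicting $x_\star\neq y_\star$.

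The remaining parts are short. For (iii), the reverse implication uses the obvious identification $x\mapsto\{x\}$; the direct implication is a cardinality count, since the non-empty classes partition $V(X)$ into $|V(X_\star)|=|V(X)|$ blocks. For (iv), an isomorphism $\varphi:X\to Y$ preserves closed neighbourhoods, hence sends twin classes of $X$ bijectively onto twin classes of $Y$, and the induced bijection $V(X_\star)\to V(Y_\star)$ is an isomorphism by construction. The main obstacle is (ii), specifically the bookkeeping needed to establish the decomposition of $N_X[x]$ as a union of entire twin classes; once that lemma is in place, all four statements fall out quickly.
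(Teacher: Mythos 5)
Your proof is correct and complete. The paper itself gives no proof of this proposition (it is imported verbatim from Sabidussi \cite[Lemma 6.4]{sab61}), so there is no in-paper argument to compare against; your route — twins are mutually adjacent, hence each class is a clique, the quotient adjacency is well defined, and $N_X[x]$ decomposes as the union of the whole classes indexed by $N_{X_\star}[x_\star]$ — is the standard one, and all four parts check out. One cosmetic remark on (ii): when $z\sim x$ with $z_\star=x_\star$ the parenthetical ``so $z_\star\sim x_\star$'' is not literally true ($X_\star$ has no loops), but the conclusion you actually need, namely $z_\star\in N_{X_\star}[x_\star]$, holds in that case as well, so the decomposition and the final contradiction $N_X[x]=N_X[y]\Rightarrow x_\star=y_\star$ are unaffected.
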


The following lemma is easy to prove:
\begin{lemma}\label{lem-X-0-dismants-on-Xstar}
$X \in D_0$ if, and only if, $X_\star \in D_0$.
\end{lemma}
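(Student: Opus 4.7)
My plan is to reduce this lemma to the confluence property of $0$-dismantling recalled at the start of Section 4. The key observation is that any two distinct twin vertices $x, y$ satisfy $N_X[x]=N_X[y]$, so each dominates the other and both are $0$-dismantlable. Iteratively deleting all but one vertex from each twin class, I obtain $X \rd 0 \widetilde{X}$, where $\widetilde{X}$ is the induced subgraph of $X$ on the chosen representatives; by Proposition \ref{sab61}(i), $\widetilde{X} \cong X_\star$, and crucially the choice of representative within each twin class is arbitrary.

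The implication $X_\star \in D_0 \Rightarrow X \in D_0$ is then immediate: any such $\widetilde{X} \cong X_\star$ satisfies $\widetilde{X} \in D_0$, hence $\widetilde{X} \rd 0 \{v\}$ for some $v$, and composing with $X \rd 0 \widetilde{X}$ yields $X \rd 0 \{v\}$, i.e.\ $X \in D_0$.

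For the converse, I would start from a given $0$-dismantling $X \rd 0 \{v\}$ and exploit the freedom above by choosing the representatives of $X_\star$ so that $v$ itself represents its own twin class. This yields an induced subgraph $\widetilde{X} \cong X_\star$ containing $v$, still with $X \rd 0 \widetilde{X}$. Applying the confluence property $(X\rd 0 X'', X \rd 0 X', X''\subset X') \Rightarrow X' \rd 0 X''$ (\cite[Corollary 2.1]{fl}) with $X' = \widetilde{X}$ and $X'' = \{v\}$ then gives $\widetilde{X} \rd 0 \{v\}$, so $X_\star \in D_0$.

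The only subtle point is the ability, for the forward direction, to arrange the endpoint $v$ of a $0$-dismantling of $X$ to lie in the representative set defining $\widetilde{X} \cong X_\star$; once the flexibility in the construction of $\widetilde{X}$ is recognized, this is transparent. An alternative would be a direct induction on $|V(X)|$, but that would force one to analyse how the deletion of a single vertex can create new twin pairs in $X-x$, and appealing to confluence sidesteps this cleanly.
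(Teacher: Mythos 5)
Your proof is correct; the paper states this lemma without proof (``easy to prove''), so there is no authorial argument to compare against, but your route is a natural and complete one. The two points that need checking both hold: twins remain twins in any induced subgraph containing them (since $N_{X[S]}[x]=N_X[x]\cap S$), so the surviving representative dominates each deleted vertex at the moment of its deletion and $X \rd 0 \widetilde{X}$ is a genuine $0$-dismantling; and adjacency in $X$ between vertices of two distinct twin classes depends only on the classes, so \emph{any} transversal induces a subgraph isomorphic to $X_\star$ (a slight strengthening of the bare statement of Proposition~\ref{sab61}(i), but immediate from its proof). Your use of the confluence property of $0$-dismantling, with the representative of $v$'s class chosen to be $v$ itself, correctly sidesteps the only real nuisance here, namely that deleting a vertex can create new twin pairs, which would complicate a direct induction.
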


\noindent We recall that a graph $X$ is \textsl{vertex-transitive} if 
its automorphism group ${\rm Aut}(X)$ acts transitively on $V(X)$ 
(i.e., for any vertices $x,y$, there is an automorphism $\f$ of $X$ 
such that $\f(x)=y$).  In a vertex-transitive graph, all  vertices 
have  isomorphic neighbourhoods and:

\begin{lemma}\label{lem-vt-imply-twin-egale-D0}
Let $X$ be a vertex-transitive graph.
\smallbreak

(i) ${\rm Twins}(X)=D_0(X)$.
\smallbreak

(ii) $X_\star$ is  vertex-transitive.
\smallbreak

(iii) Let $x \in X$ such that $N_X[x]$ is a clique,  
$x_{\star}$ is equal to $N_X[x]$ and is a connected component of $X$. 
\end{lemma}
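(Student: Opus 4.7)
The unifying observation is that vertex-transitivity forces all vertices to have equal degree, so any inclusion of closed neighbourhoods in $X$ must in fact be an equality. I will use this repeatedly in all three parts.

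For (i), the inclusion ${\rm Twins}(X)\subset D_0(X)$ is immediate: twin vertices dominate each other. For the reverse inclusion, I would take $x\in D_0(X)$ and a vertex $a\neq x$ with $N_X[x]\subset N_X[a]$. By vertex-transitivity $|N_X[x]|=|N_X[a]|$, so the inclusion is an equality, which means $x\:\cE\: a$ and $x\in {\rm Twins}(X)$.

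For (ii), I would first check that every automorphism $\varphi$ of $X$ respects the equivalence relation $\cE$: from $N_X[x]=N_X[y]$ one deduces $N_X[\varphi(x)]=\varphi(N_X[x])=\varphi(N_X[y])=N_X[\varphi(y)]$, so $\varphi$ descends to a well-defined map $\varphi_{\star}\colon V(X_{\star})\to V(X_{\star})$ given by $\varphi_{\star}(x_{\star})=\varphi(x)_{\star}$. It is straightforward to verify that $\varphi_{\star}$ is an automorphism of $X_{\star}$ (adjacency in $X_{\star}$ is defined directly from adjacency in $X$). Given any two classes $x_{\star}$ and $y_{\star}$, picking $\varphi\in {\rm Aut}(X)$ with $\varphi(x)=y$ yields $\varphi_{\star}(x_{\star})=y_{\star}$, so $X_{\star}$ is vertex-transitive.

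For (iii), I assume $N_X[x]$ is a clique. For every $y\in N_X[x]$, the clique condition gives $N_X[x]\subset N_X[y]$, and the degree argument above upgrades this to $N_X[y]=N_X[x]$, so $y\in x_{\star}$; the reverse inclusion $x_{\star}\subset N_X[x]$ is obvious since any $y$ with $N_X[y]=N_X[x]$ satisfies $y\in N_X[y]=N_X[x]$. Hence $x_{\star}=N_X[x]$. Finally, if $y\in N_X[x]$ and $z\sim y$, then $z\in N_X[y]=N_X[x]$, so $N_X[x]$ is closed under taking neighbours; being also connected (as a clique), it is a connected component of $X$.

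The main obstacle is essentially non-existent: the entire proof rests on the single observation that in a vertex-transitive graph any domination is an equality of closed neighbourhoods. Once that is noted, each of (i), (ii), (iii) follows by a direct verification, and the only care needed is in (ii) to confirm well-definedness of $\varphi_{\star}$ on equivalence classes.
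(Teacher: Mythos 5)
Your proof is correct and follows essentially the same route as the paper: in all three parts the key step is that vertex-transitivity (via equal degrees) upgrades any inclusion of closed neighbourhoods to an equality, and in (ii) the automorphism descends to $X_\star$ exactly as in the paper's argument. The only difference is that you spell out the cardinality justification and the well-definedness of $\varphi_\star$, which the paper leaves implicit.
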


\begin{proof}
(i) The inclusion ${\rm Twins}(X)\subset D_0(X)$ is obvious.
Now, let $a$ and $b$ be two vertices with $a \dom b$. 
The inclusion $N_X[a]\subset N_X[b]$ becomes $N_X[a]= N_X[b]$
in a vertex-transitive graph. This proves that $a$ and $b$ are twin vertices
and that $D_0(X) \subset {\rm Twins}(X) $.

(ii) This follows directly from the fact
that every automorphism $\f : X \to X$ induces 
an automorphism $\f_\star : X_\star \to X_\star$
defined by $\f_\star(x_\star)=(\f(x))_\star$ for
every vertex $x$ of $X$.

(iii) Since $N_X[x]$ is a clique of $X$, $N_X[x] \subset N_X[y]$ 
for any vertex $y$ adjacent to $x$. So, by vertex transitivity,  
$N_X[x]=N_X[y]$ and $x_{\star}=N_X[x]$. Now, as $N_X[x]=N_X[y]$ 
whenever $y\sim x$, we get that $z\sim y$ 
and $y\sim x$ implies $z\sim x$ for all vertices $y$ and $z$
and this proves that the connected component 
containing $x$ is equal to $N_X[x]$.
\end{proof}

\begin{prop}\label{prop-D0+vt-imply-complete}
If $X$ is a $0$-dismantlable and vertex-transitive graph, then $X$ is a complete graph.
\end{prop}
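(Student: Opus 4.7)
The plan is to reduce the statement to the quotient graph $X_\star$ obtained by identifying twin vertices, and show that the hypotheses force $X_\star$ to be a single point, from which completeness of $X$ follows immediately. All the machinery has just been set up: Proposition \ref{sab61}, Lemma \ref{lem-X-0-dismants-on-Xstar}, and Lemma \ref{lem-vt-imply-twin-egale-D0}.

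First, I would transfer the two hypotheses of the statement to $X_\star$. By Lemma \ref{lem-X-0-dismants-on-Xstar}, since $X \in D_0$ we get $X_\star \in D_0$. By Lemma \ref{lem-vt-imply-twin-egale-D0}(ii), the vertex-transitivity of $X$ implies the vertex-transitivity of $X_\star$. In addition, Proposition \ref{sab61}(ii) tells us that $X_\star$ has no twin vertices: ${\rm Twins}(X_\star) = \emptyset$.

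Next, I would apply Lemma \ref{lem-vt-imply-twin-egale-D0}(i) to the vertex-transitive graph $X_\star$ to obtain $D_0(X_\star) = {\rm Twins}(X_\star) = \emptyset$. On the other hand, because $X_\star \in D_0$, any $0$-dismantling sequence of $X_\star$ either is empty (if $X_\star$ is already a point) or begins with a vertex belonging to $D_0(X_\star)$. The second case would contradict $D_0(X_\star) = \emptyset$, so $|V(X_\star)| = 1$, meaning that $V(X)$ consists of a single twin class. In other words, $N_X[x] = N_X[y]$ for all $x, y \in V(X)$. For any two distinct vertices $x \neq y$ we then have $y \in N_X[y] = N_X[x]$, hence $x \sim y$, and $X$ is complete.

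The whole argument is essentially bookkeeping once the auxiliary lemmas are in place; the only genuine idea is that passing to $X_\star$ lets one exploit the rigidity ${\rm Twins}(X_\star) = \emptyset$, which is incompatible with $0$-dismantlability in the vertex-transitive setting except in the trivial case. The main care needed is simply to verify that $X_\star \in D_0$ with $D_0(X_\star) = \emptyset$ really does force $X_\star$ to be a single vertex, which is immediate from the definition of $0$-dismantlability.
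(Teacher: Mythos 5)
Your proof is correct, and it takes a genuinely more direct route than the paper's. The paper proves the statement by induction on $\vert V(X)\vert$: it applies Lemma \ref{lem-vt-imply-twin-egale-D0}(i) to $X$ itself to get ${\rm Twins}(X)=D_0(X)\neq\emptyset$, which guarantees $\vert V(X_\star)\vert < \vert V(X)\vert$; the induction hypothesis then shows $X_\star$ is complete, and only at that point does ${\rm Twins}(X_\star)=\emptyset$ force $X_\star\cong pt$. You instead apply Lemma \ref{lem-vt-imply-twin-egale-D0}(i) to the quotient $X_\star$ (legitimate, since by Lemma \ref{lem-vt-imply-twin-egale-D0}(ii) it is vertex-transitive), obtaining $D_0(X_\star)={\rm Twins}(X_\star)=\emptyset$ directly from Proposition \ref{sab61}(ii), and then observe that a graph in $D_0$ with no $0$-dismantlable vertex must be a point, since by definition the first vertex of any $0$-dismantling sequence of a graph on at least two vertices lies in $D_0$ of that graph. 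This eliminates the induction entirely and shows it was never needed; the two arguments use exactly the same toolbox (Lemmas \ref{lem-X-0-dismants-on-Xstar} and \ref{lem-vt-imply-twin-egale-D0}, Proposition \ref{sab61}(ii)), but yours isolates the rigidity of $X_\star$ as the single decisive point. The final step, that a one-class twin relation forces completeness, is handled correctly.
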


\begin{proof}
Let $X$ be a 0-dismantlable and vertex-transitive graph. 
We prove that $X$ is a complete graph by induction on $k=\vert V(X)\vert$.
If $\vert V(X)\vert =1$, $X\cong pt=K_1$ and there is nothing to prove.
Let $k\geq 1$ and let us suppose that any 0-dismantlable 
and vertex-transitive graph with at most $k$ vertices
is a complete graph. 
Let $X$ be a 0-dismantlable and vertex-transitive graph
with $k+1$ vertices. By
Lemma \ref{lem-vt-imply-twin-egale-D0}(ii), the graph $X_\star$ is a vertex-transitive graph 
and, by Lemma \ref{lem-X-0-dismants-on-Xstar}, $X_\star \in D_0$.
As $X\in D_0$ and $\vert V(X)\vert \geq 2$,
$D_0(X)\neq \emptyset$ and, by Lemma \ref{lem-vt-imply-twin-egale-D0}(i),
${\rm Twins}(X)=D_0(X)\neq \emptyset$.
So, $\vert V(X_{\star}) \vert < \vert V(X) \vert $ and,
by induction hypothesis, $X_\star$ is a complete graph.
As ${\rm Twins}(X_\star)=\emptyset$, by Proposition \ref{sab61}(ii), 
we conclude that $X_\star \cong pt$
and this proves that $X$ is a complete graph.
\end{proof}

\noindent
Given the equivalence between $0$-dismantlability 
for graphs and strong collapsibility for clique complexes 
(case $k=0$ of Proposition \ref{prop_link}), 
Proposition \ref{prop-D0+vt-imply-complete} is nothing but 
\cite[Corollary 6.6]{barmin12} in the restricted case of flag complexes. 
But the proof given here doesn't refer to the fixed points scheme
and can be generalised by introducing the notion of
$i$-complete-transitive graphs. In what follows, 
if $(v_1,\cdots,v_k)\in V(X)^k$,
then the subgraph of $X$ induced by $(v_1,\cdots,v_k)$ refers to
$X[v_1,\cdots,v_k]$, the subgraph induced by $\{v_1,\cdots,v_k\}$.

\begin{definition}
Given $i \geq 1$, a graph $X$ will be called \emph{$i$-complete-transitive} 
if for all $1 \leq k\leq i$ and all pairs 
$$\left\{ (x_{1},\cdots,x_{k}),
(x'_{1},\cdots,x'_{k})\right\}$$ of $k$-tuples
of pairwise distinct vertices inducing a complete subgraph  of $X$,
there exists $f \in {\rm Aut}(X)$ such that 
$f(x_{j})=x'_j$ for all $j \in \{1,\cdots,k\}$. 
\end{definition}

\noindent  
The set of $i$-complete-transitive graphs contains
the set of  {\sl $i$-transitive graphs } previously introduced
in \cite{cameron, meredith}. 
We note that 1-complete-transitive graphs are just vertex-transitive
graphs and a 2-complete-transitive graph is a vertex-transitive
and arc-transitive graph.
Complete-transitive graphs are  a generalisation of
arc-transitive graphs  but to complete subgraphs and not to
paths, as are the $i$-arc-transitive graphs \cite{gr}.
Kneser graphs are examples of $i$-complete-transitive graphs 
for all integers $i$.
We now have the following generalisation of 
Proposition \ref{prop-D0+vt-imply-complete}.

\begin{prop}\label{prop-Dk+k-vt-imply-complete}
Let $X$ be a graph and $k \in \mN$.  
If $X \in D_k$ and if $X$ is $(k+1)$-complete-transitive, 
then $X$ is a complete graph.
\end{prop}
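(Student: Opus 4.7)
The plan is to argue by induction on $k$, the base case $k=0$ being precisely Proposition \ref{prop-D0+vt-imply-complete} (a 1-complete-transitive graph is just a vertex-transitive graph). So suppose the result is established for $k-1$ and let $X \in D_k$ be $(k+1)$-complete-transitive. If $\vert V(X)\vert = 1$ there is nothing to prove, so assume $\vert V(X)\vert \geq 2$. Since $X\in D_k$, there exists $x\in D_k(X)$, hence $N_X(x)\in D_{k-1}$; and by vertex-transitivity (the $k=1$ case of complete-transitivity), every vertex is $k$-dismantlable, so $N_X(y)\in D_{k-1}$ for every $y\in V(X)$.

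The key step is to show that the graph $N_X(x)$ is itself $k$-complete-transitive, which will allow me to invoke the induction hypothesis. To this end I would take two $j$-tuples $(y_1,\dots,y_j)$ and $(y'_1,\dots,y'_j)$ of pairwise distinct vertices of $N_X(x)$ with $1\leq j\leq k$, each inducing a complete subgraph of $N_X(x)$. Because every $y_i$ (and every $y'_i$) is adjacent to $x$, the $(j+1)$-tuples $(x,y_1,\dots,y_j)$ and $(x,y'_1,\dots,y'_j)$ are pairwise distinct and induce complete subgraphs of $X$. Since $j+1\leq k+1$, the $(k+1)$-complete-transitivity of $X$ yields an automorphism $f\in \mathrm{Aut}(X)$ with $f(x)=x$ and $f(y_i)=y'_i$ for all $i$. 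Because $f$ fixes $x$, it stabilises $N_X(x)$ and restricts to an automorphism of $N_X(x)$ sending $(y_1,\dots,y_j)$ to $(y'_1,\dots,y'_j)$, which proves the claim.

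Now $N_X(x)\in D_{k-1}$ is $k$-complete-transitive, so the induction hypothesis gives that $N_X(x)$ is a complete graph, hence $N_X[x]$ is a clique of $X$. At this point Lemma \ref{lem-vt-imply-twin-egale-D0}(iii) applies (since $X$ is vertex-transitive) and ensures that $N_X[x]$ is a connected component of $X$. But every element of $D_k$ is connected (the excerpt already notes that non-connected graphs are not in $D_{\infty}$), so $X=N_X[x]$ and $X$ is a complete graph.

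The main obstacle I foresee is precisely the claim that $N_X(x)$ is $k$-complete-transitive: one must check that attaching the fixed vertex $x$ to a complete $j$-tuple in $N_X(x)$ produces a complete $(j+1)$-tuple in $X$ to which the $(k+1)$-complete-transitivity applies, and that the resulting automorphism does fix $x$. The exponent bookkeeping (losing exactly one level of complete-transitivity when passing from $X$ to $N_X(x)$) matches the loss of one level of dismantlability, which is what makes the induction close neatly, and it explains why the hypothesis is phrased at level $k+1$ rather than $k$.
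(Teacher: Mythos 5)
Your proof is correct and follows essentially the same route as the paper's: augment the clique-inducing tuples of $N_X(x)$ by the fixed vertex $x$, apply the higher complete-transitivity of $X$ to get an automorphism fixing $x$ whose restriction shows $N_X(x)$ is one level less complete-transitive, conclude by induction that $N_X(x)$ is complete, and finish with Lemma \ref{lem-vt-imply-twin-egale-D0}(iii) plus connectedness of graphs in $D_\infty$. The only differences are cosmetic (an index shift in the induction, and your explicit treatment of all tuple lengths $1\leq j\leq k$, which is if anything slightly more careful than the paper's write-up).
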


\begin{proof}
We prove it by induction on $k\geq 0$.

For $k=0$, the claimed assertion is given by Proposition \ref{prop-D0+vt-imply-complete}.

Let $k\geq 0$ and suppose that any $k$-dismantlable 
and $(k+1)$-complete-transitive graph 
is a complete graph. 
Let $X$ be a $(k+1)$-dismantlable 
and $(k+2)$-complete-transitive graph
and  $x \in D_{k+1}(X)$.
We will verify that the $k$-dismantlable graph $N_X(x)$ is a $(k+1)$-complete-transitive graph.
Let $\{(x_{1},\cdots,x_{k+1}),$ $(x'_{1},\cdots,x'_{k+1})\}$ 
be a pair of
sets of vertices of cardinal $k+1$, each of them inducing a clique 
of $N_X(x)$, the pair of sets
$\{(x,x_{1},\cdots,x_{k+1}),(x,x'_{1},\cdots,x'_{k+1})\}$ 
is of cardinal $(k+2)$, each of them inducing a clique
of $X$. By $(k+2)$-complete-transitivity of $X$, there exists 
$f \in {\rm Aut}(X)$ such that $f(x)=x$ and
$f(x_{i})=x'_i$ for all $i \in \{1,\cdots,k+1\}$.
In particular, $\varphi=f_{\vert N_X(x)}$ verifies
$\varphi \in {\rm Aut}(N_X(x))$ and 
$f(x_{i})=x'_i$ for all $i \in \{1,\cdots,k+1\}$.
So, $N_X(x)$ is a $k$-dismantlable and $(k+1)$-complete-transitive graph.
By induction hypothesis, $N_X(x)$ is a complete graph. 
As $X$ is vertex-transitive, by Lemma \ref{lem-vt-imply-twin-egale-D0}(iii),
it means that, for any vertex $x$ of $X$,
the connected component of $X$ containing $x$ is a complete subgraph.
Now, $X$ is connected since it is in $D_\infty$ 
 and so,  $X$ is a complete graph.
\end{proof}

Let us recall the notion of evasivity for simplicial 
complexes \cite{bjorner,kozlov}. 
One can present it as a game:
given a (known) simplicial complex $\cK$ with vertex 
set $V$ of cardinal $n$, through a series of questions,
a player has to determine if a given (unknown) subset 
$A$ of $V$ is a simplex of $\cK$. The only possible questions  
for the player are,
 for every vertex $x$ of $V$,  \og is $x$ in $A$\:?\fg.
The complex $\cK$ is called \textsl{non-evasive} if,
whatever is the chosen subset $A$ of $V$, the player can 
determine if $A$ is a simplex of $\cK$ 
in at most $(n-1)$ questions. 
By restriction to flag complexes,
we get the notion of non-evasiveness for graphs:

\begin{definition}
A graph $X$ is called \textsl{non-evasive} if
$\cl(X)$ is a non-evasive simplicial complex.
\end{definition}

In other terms, a graph $X$ is called \textsl{non-evasive} if
for any  $A\subset V(X)=\{x_1,\cdots,x_n\}$ one can
guess if $A$ is a complete subgraph of $X$ in
at most $n-1$ questions of the form \og is $x$ in $A$\:?\fg.
In \cite{barmin12}, the authors note that a complex $\cK$ 
is non-evasive if,  and only if, 
there is an integer $n$ such that $\cK$ is $n$-collapsible. 
The equivalence between $k$-dimantlability of graphs
and $k$-collapsibility of flag complexes
(cf. Proposition \ref{prop_link}) gives:  

\begin{prop}\label{prop-NE-egale-Dinfini}
A graph $X$ is non-evasive if, and only if, $X$ is in $D_{\infty}$. 
\end{prop}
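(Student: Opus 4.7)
The plan is to chain three equivalences, each of which is either a definition or an already established result. By the definition of non-evasiveness for graphs given just above the statement, $X$ is non-evasive if and only if $\cl(X)$ is a non-evasive simplicial complex. Next, recall from Section~\ref{section-simplicial-cxs} (and the characterization of \cite{barmin12} quoted immediately before this proposition) that a simplicial complex $\cK$ is non-evasive if and only if there exists an integer $k \geq 0$ such that $\cK$ is $k$-collapsible. Applying this to $\cK = \cl(X)$, we obtain that $X$ is non-evasive if and only if $\cl(X)$ is $k$-collapsible for some $k \geq 0$.

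The final link is Proposition~\ref{prop_link}, which states that for every $k \geq 0$, $\cl(X)$ is $k$-collapsible if and only if $X \in D_k$. Hence the existence of some $k$ such that $\cl(X)$ is $k$-collapsible is equivalent to the existence of some $k$ such that $X \in D_k$, which by the definition $D_\infty = \bigcup_{k \geq 0} D_k$ is precisely the statement $X \in D_\infty$.

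There is no real obstacle here: the work has already been done in establishing Proposition~\ref{prop_link} and in importing the $k$-collapsibility characterization of non-evasiveness from \cite[Definition 5.3]{barmin12}. The only care needed is to write the chain cleanly, noting that the quantifier \emph{some $k \geq 0$} passes unchanged through Proposition~\ref{prop_link} because that proposition holds for each fixed $k$. One could organize the write-up as a two-line display of iff's, or as two short paragraphs (one for each direction) if a less telegraphic style is preferred, but in either case no new ideas beyond the already-cited results are required.
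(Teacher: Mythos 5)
Your proposal is correct and is essentially the paper's own argument: the authors derive the proposition by combining the definition of a non-evasive graph, the Barmak--Minian characterization of non-evasive complexes as those that are $k$-collapsible for some $k$, and Proposition \ref{prop_link}, exactly as you do. No differences worth noting.
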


\noindent 
The Evasiveness Conjecture for simplicial complexes 
states that \emph{every non-evasive
vertex homogeneous simplicial complex is a simplex} 
\cite{kss84}. Again, its
 restriction to clique complexes 
can be formulated in terms of graphs:

\begin{conj}[Evasiveness conjecture for graphs]\label{conj1}
Let $X$ be a graph,  if $X$ is in $D_{\infty}$ 
and vertex-transitive, 
then $X$ is a complete graph. 
\end{conj}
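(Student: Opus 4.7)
The plan is to proceed by induction on the smallest integer $k$ such that $X\in D_k$. The base case $k=0$ is exactly Proposition \ref{prop-D0+vt-imply-complete}. For the inductive step, one would assume every $(k-1)$-dismantlable vertex-transitive graph is a complete graph and take $X$ vertex-transitive with $X\in D_k$. Pick any $x\in D_k(X)$; vertex-transitivity guarantees that every vertex of $X$ lies in $D_k(X)$ and that all open neighbourhoods $N_X(x)$ are mutually isomorphic. If one could show that $N_X(x)$ is itself vertex-transitive, the inductive hypothesis would give $N_X(x)\cong K_r$ for some $r$, and Lemma \ref{lem-vt-imply-twin-egale-D0}(iii) together with the connectedness of graphs in $D_\infty$ would force $X\cong K_{r+1}$.

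The serious obstruction lies precisely at this step: vertex-transitivity of $X$ does not transfer to $N_X(x)$, because transitivity of the stabiliser ${\rm Aut}(X)_x$ on $N_X(x)$ is an arc-transitivity condition strictly stronger than vertex-transitivity. This is exactly the gap that forces Proposition \ref{prop-Dk+k-vt-imply-complete} to assume $(k+1)$-complete-transitivity. So any inductive proof of Conjecture \ref{conj1} as stated must either introduce a symmetry-preserving reduction other than \og take the link of $x$\fg, or produce additional automorphisms of $N_X(x)$ that are not inherited from stabilisers in ${\rm Aut}(X)$.

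A parallel route, following the strategy of Kahn, Saks and Sturtevant for the general evasiveness conjecture, replaces the induction on $k$ by topological fixed-point arguments on $\cl(X)$. Since $X\in D_\infty$, Proposition \ref{prop_link} implies that $\cl(X)$ is collapsible, hence $\mathbb{Z}/p\mathbb{Z}$-acyclic for every prime $p$. By Smith theory, the fixed-point subcomplex $\cl(X)^G$ is then $\mathbb{Z}/p\mathbb{Z}$-acyclic, in particular non-empty, for every $p$-subgroup $G\leq {\rm Aut}(X)$; if ${\rm Aut}(X)$ contains a $p$-subgroup acting transitively on $V(X)$, this forces $\cl(X)^G=\cl(X)$ and hence $X$ complete. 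This handles the case where $|V(X)|$ is a prime power and a handful of related arithmetic configurations, but leaves the general conjecture open.

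The main obstacle is therefore structural: vertex-transitivity is simply too weak a symmetry hypothesis to be propagated along the recursive definition of $D_\infty$, while the $p$-group methods only cover restricted arithmetic regimes of $|V(X)|$ or $|{\rm Aut}(X)|$. A complete proof would seem to require either a cleverer choice of induction variable (or of the vertex $x$ at which one dismantles) that remains compatible with mere vertex-transitivity, or a global topological argument leveraging the full automorphism group without passing through neighbourhoods.
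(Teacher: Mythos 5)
The statement you were asked about is stated in the paper as a \emph{conjecture} (the restriction of the Kahn--Saks--Sturtevant evasiveness conjecture to flag complexes); the paper offers no proof, and you correctly refrain from fabricating one. Your diagnosis of the obstruction --- that vertex-transitivity of $X$ does not descend to $N_X(x)$, which is exactly why Proposition \ref{prop-Dk+k-vt-imply-complete} must assume $(k+1)$-complete-transitivity --- matches the paper's own motivation for introducing $i$-complete-transitive graphs, and your remarks on the Smith-theory route for prime-power orders accurately reflect the known partial results. In short, your assessment is consistent with the paper: the conjecture remains open, and the only proved cases are those under strengthened transitivity hypotheses.
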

This formulation should not be confused with the evasiveness
 conjecture for monotone graph properties 
 \cite{bjorner,kss84, kozlov}. Let's note that
 Proposition \ref{prop-Dk+k-vt-imply-complete} 
 is a direct consequence of the
 conjecture, if that one is true.
Following a remark due to Lov\'asz, Rivest and 
Vuillemin \cite{rivest} 
pointed out that a positive answer 
to the evasiveness conjecture implies 
that a finite vertex-transitive graph with a clique  
 which intersects all its maximal cliques is a complete graph. 
Actually,  they prove that a graph with a clique  
 which intersects all its maximal cliques is
non-evasive, i.e. is in $D_{\infty}$ by
Proposition \ref{prop-NE-egale-Dinfini}. 
Remark 3.3 of \cite{bar13} is another formulation 
of this result. Indeed, the 1-skeletons of star clusters 
are exactly the graphs  which contain a clique 
intersecting all maximal 
cliques. Theorem \ref{theo-dismantling-and-payan} 
will give a stronger result.

We recall
that if  $Y$ and $Z$ are two subgraphs of a graph $X$,
$Y \cap Z$ will denote the subgraph $(V(Y)\cap V(Z), E(Y)\cap E(Z))$
and one says that $Y$ intersects $Z$ if $V(Y\cap Z)\neq \emptyset$.

\begin{lemma}\label{lemma_payan-properties}
If $X$ is a graph with a clique 
$A$  which intersects all maximal cliques of $X$
and $x$ is in $V(X)\setminus V(A)$, then:  
\smallbreak

(i)  $A$  intersects  all maximal cliques of $X-x$.


(ii) $A \cap N_X(x)$ is a  
complete graph   which intersects all maximal cliques of $N_X(x)$.
\end{lemma}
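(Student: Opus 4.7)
For part (i), my plan is to pick an arbitrary maximal clique $M$ of $X-x$ and show $V(A)\cap V(M)\neq \emptyset$ by comparing $M$ with the maximal cliques of $X$. I would split into two cases depending on whether $M$ is still maximal in $X$. If it is, the hypothesis on $A$ applies directly. Otherwise, $M$ extends to a maximal clique $M'$ of $X$, and the key observation is that $M'\setminus M$ must consist only of $x$: any other vertex in $M'\setminus M$ would already be available in $X-x$ and could be adjoined to $M$ there, contradicting the maximality of $M$ in $X-x$. Thus $M'=M\cup\{x\}$, and since the hypothesis gives $V(A)\cap V(M')\neq\emptyset$ while $x\notin V(A)$, the witness must lie in $V(M)$.

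For part (ii), the completeness of $A\cap N_X(x)$ is immediate, since it is an induced subgraph of the clique $A$. For the intersection property I would use a lifting argument: given a maximal clique $M$ of $N_X(x)$, the set $M\cup\{x\}$ is a clique of $X$ because every vertex of $M$ is adjacent to $x$; extend it to a maximal clique $M'$ of $X$. By hypothesis there is some $a\in V(A)\cap V(M')$, and since $x\notin V(A)$ we have $a\neq x$. As $M'$ is a clique containing $x$, this $a$ lies in $N_X(x)$. Moreover $M\cup\{a\}$ is a clique inside $N_X(x)$ (because $M\cup\{a\}\subseteq M'$), so maximality of $M$ in $N_X(x)$ forces $a\in V(M)$. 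Therefore $a\in V(A\cap N_X(x))\cap V(M)$, as required.

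The only delicate point, and the place where the hypothesis $x\notin V(A)$ is actually used, is ruling out the possibility that the witness supplied by the hypothesis on $A$ happens to be $x$ itself; once this is excluded, both parts reduce to routine bookkeeping about how maximal cliques behave under vertex deletion and under restriction to an open neighbourhood. I do not anticipate any real obstacle beyond organising the case distinction in (i) cleanly.
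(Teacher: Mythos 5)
Your proposal is correct and follows essentially the same route as the paper: in both parts one passes from a maximal clique of $X-x$ (resp.\ of $N_X(x)$) to a maximal clique of $X$ containing it, invokes the hypothesis on $A$, and uses $x\notin V(A)$ to exclude $x$ as the witness. The only difference is cosmetic: the paper directly asserts that $K+x$ is a maximal clique of $X$, whereas you extend to an arbitrary maximal clique and then argue the witness lands back in $M$ — a slightly more careful but equivalent bookkeeping.
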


\begin{proof}

(i) Let $K$ be a maximal clique of $X-x$. 
If $K$ is a maximal clique of $X$, then, by property of $A$, 
$K \cap A \neq \emptyset$. Otherwise, $K+x$ is a 
maximal clique of $X$ and, by property of
$A$, $(K+x)\cap A\neq \emptyset$. 
Since $x\not \in A$, it implies $K \cap A \neq \emptyset$.


\noindent (ii) If $K$ is a maximal clique in $N_X(x)$, 
then $K + x$ is a maximal clique
of $X$ and, by property of $A$, $(K + x)\cap A \neq \emptyset$. 
As $x\not\in A$, $K\cap A \neq \emptyset$ and also 
$K \cap \bigl(A \cap N_{X}(x)\bigr) \neq \emptyset$ since $K\subset N_X(x)$.
\end{proof}

\begin{theorem}\label{theo-dismantling-and-payan}
Let $X$ be a graph. If $A$ is a clique   which 
intersects all maximal cliques of $X$,
then  $X \in D_{a-2}$ 
with $a=\vert V(A)\vert \geq 1$.
Moreover, $X \rd{\scriptstyle{a-2}} A$ 
if $\vert V(A)\vert \geq 2$.

\end{theorem}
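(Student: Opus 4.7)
The plan is a double induction: primarily on $a=|V(A)|$, and secondarily on $|V(X)\setminus V(A)|$. For the primary base case $a=1$, writing $A=\{v\}$, every vertex of $X$ lies in some maximal clique, which must contain $v$; thus $v$ is adjacent to every other vertex of $X$, so $X$ is a cone with apex $v$ and $X\in D_{-1}=D_{a-2}$, as desired.

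Now fix $a\geq 2$ and assume the theorem for every clique of size less than $a$. The secondary base case $V(X)=V(A)$ gives $X=A$, which is complete, so $X\rd{a-2} A$ holds trivially. Otherwise, the strategy is to exhibit a vertex $x\in V(X)\setminus V(A)$ that is $(a-2)$-dismantlable in $X$; once found, Lemma~\ref{lemma_payan-properties}(i) preserves the hypothesis on $A$ inside $X-x$, and the secondary induction hypothesis yields $X-x\rd{a-2} A$, hence $X\rd{a-2} A$. The conclusion $X\in D_{a-2}$ then follows because $A\cong K_a$ is a cone, so $A\rd 0 pt$ and a fortiori $A\rd{a-2} pt$.

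To locate such an $x$, the key tool is Lemma~\ref{lemma_payan-properties}(ii): for each $x\in V(X)\setminus V(A)$, the set $A\cap N_X(x)$ is a clique of $N_X(x)$ meeting every maximal clique of $N_X(x)$, and its size $a_x:=|V(A\cap N_X(x))|$ is at least $1$, since $x$ belongs to some maximal clique which in turn meets $A$. If some $x$ satisfies $a_x\leq a-1$, the primary induction hypothesis applied to the strictly smaller graph $N_X(x)$ with its clique $A\cap N_X(x)$ (with the value $a_x=1$ falling into the primary base case) yields $N_X(x)\in D_{a_x-2}\subseteq D_{a-3}$, so $x\in D_{a-2}(X)$. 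Otherwise every $x\notin V(A)$ satisfies $A\subseteq N_X(x)$, which forces each vertex of $A$ to be adjacent to every other vertex of $X$; hence $X$ is a cone with apex any $a'\in V(A)$, and every $x\notin V(A)$ is dominated by $a'$, giving $x\in D_0(X)\subseteq D_{a-2}(X)$.

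The delicate point is this second alternative, where the attempt to shrink the parameter $a_x$ fails; the resolution is to notice that the failure itself forces $X$ to be a cone, so $0$-dismantlability becomes available at every vertex outside $V(A)$ and comfortably exceeds the target level $a-2$. Modulo this observation the proof is a routine double induction built on the two parts of Lemma~\ref{lemma_payan-properties}.
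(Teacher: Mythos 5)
Your proof is correct and follows essentially the same route as the paper's: both hinge on the two parts of Lemma~\ref{lemma_payan-properties}, locate an $(a-2)$-dismantlable vertex outside $A$ by applying the statement inductively to $N_X(x)$ with the smaller clique $A\cap N_X(x)$, and dispose of the degenerate case $V(A)\subseteq N_X(x)$ for all $x$ by observing that $X$ is then a cone. The only difference is organizational: you run a uniform double induction on $\bigl(a,\vert V(X)\setminus V(A)\vert\bigr)$ that absorbs the case $a=2$ into the general mechanism, whereas the paper treats $a=2$ by a direct domination argument and inducts on $\vert V(X)\vert$ for $a\geq 3$.
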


\begin{proof}
Let $X$ be a graph with $n$ vertices
and  $A$  a clique of $X$   which 
intersects all maximal cliques of $X$
with $a=\vert V(A)\vert \geq 1$.

If $a=1$, then $X$ is a cone whose apex is the
 vertex of $A$, that is $X \in D_{-1}$.

If $a=2$, let us denote by $u$ 
and $v$ the vertices of $A$.
If $x \in V(X)\setminus V(A)$ with $x \sim u$ and 
$x\not\sim v$, then $x$ is dominated by $u$.
Indeed, let $y \sim x$ and $K$ a maximal clique of $X$
containing $x$ and $y$, by property of $A=\{u,v\}$,
 either $u\in K$ or $v\in K$. As $v\in K$ contradicts 
 $x\not\sim v$, we conclude that $u\in K$
and $u \sim y$.
In conclusion, a vertex not in $V(A)$ 
is  dominated by
$u$ or  $v$ or  both together. So, $X \rd{0} A$
and $X \in D_0$.

Let us suppose that $a\geq 3$, 
we will prove that  $X \rd{\scriptstyle{a-2}} A$ by 
induction on $n =\vert V(X)\vert \geq 3$.
For $n=3$, we have $X=A$ and $X$ is a complete graph.
Now, suppose that the assertion of the theorem
is true for some  $n\geq 3$
and
let us  consider a graph $X$ with $n+1$ vertices and
a clique $A$
which intersects all its maximal cliques.
If $ V(A) \subset N_X(x)$ for every 
$x \in V(X)\setminus V(A)$, then every vertex of $A$
is an apex of $X$ and $X\rd{0}A$.
If  $ V(A) \not \subset N_X(x)$ for some 
$x \in V(X)\setminus V(A)$,
then  $\vert V(A) \cap N_X(x) \vert \leq a-1$. 
By Lemma \ref{lemma_payan-properties}(ii), 
$A \cap N_X(x)$ is a  
complete  subgraph  of $N_X(x)$ which intersects 
all its maximal cliques 
and, by induction hypothesis applied to $N_X(x)$, we get 
$N_X(x)\rd{a-3} A \cap N_X(x) \rd 0 pt$
as $\vert A \cap N_X(x) \vert -2\leq (a-1) -2=a-3$.
This proves that $x \in D_{a-2}(X)$, 
that is   $X \rd{a-2} X-x$.
Moreover,  by Lemma \ref{lemma_payan-properties}(i),
the induction
hypothesis implies that  $X-x \rd{\scriptstyle{a-2}} A$.
The composition $X \rd{a-2} X-x \rd{\scriptstyle{a-2}} A$ 
proves that $X \rd{\scriptstyle{a-2}} A$.

Of course, we conclude that $X \in D_{a-2}$ because $A \rd 0 pt$.
\end{proof}

\section{Appendix: the Bing's house}

\begin{figure}[h]
\begin{center}
\includegraphics[width=14cm]{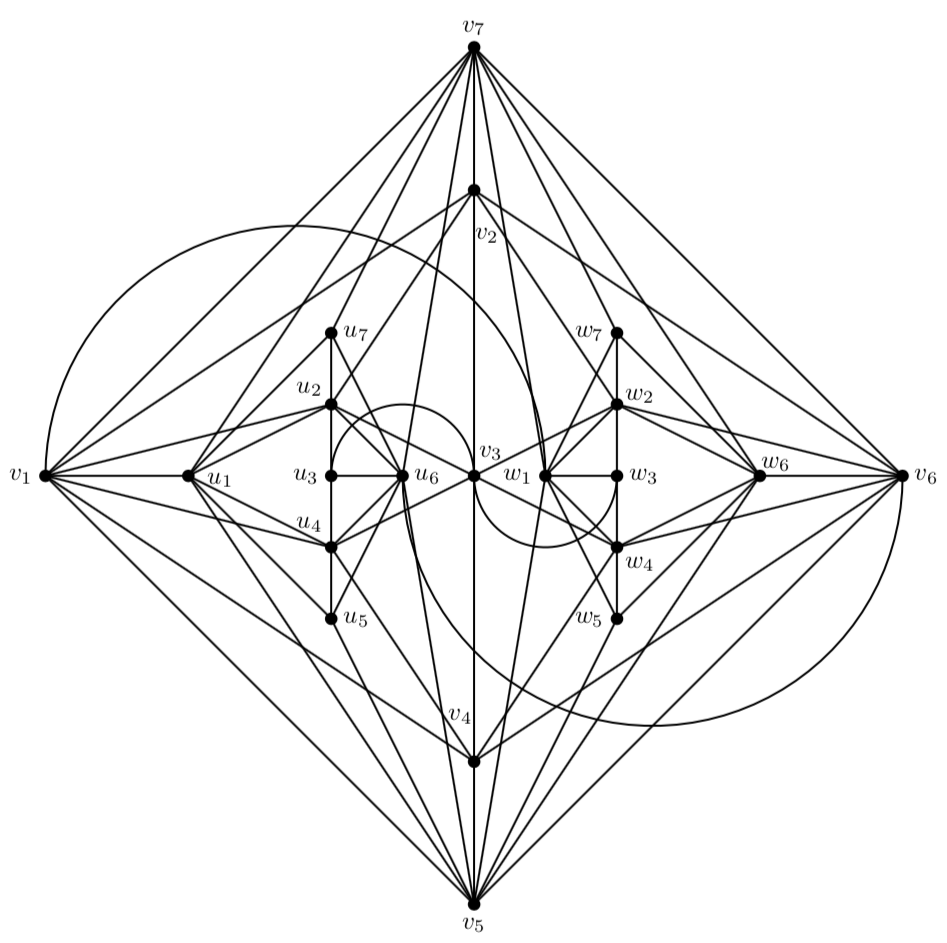}
\caption{The graph $BH$, the 1-skeleton of a 
triangulation of the Bing's House, given in \cite{cyy}.}
\label{BH-figure}
\end{center}
\end{figure}

The graph $BH$ (Fig. \ref{BH-figure}), given in \cite{cyy} where it is denoted by $G_b$,
is the 1-skeleton of 
the topological Bing's House, a space which is known 
to be contractible but non collapsible. 
In \cite{cyy}, the
 authors give an explicit sequence of deformations
of $BH$, by using additions and deletions of edges, 
in order to prove that the Bing's House is deformable 
to the simplicial complex reduced to a point by
a sequence of expansions or reductions (proving 
that the Bing's House has the simple homotopy 
type of a point). We give here a more precise result
with the proof 
(Proposition \ref{BH_DH} for the Bing's House) 
of the existence of a graph $W_{BH}$ such that:
$$BH \ep 0 W_{BH} \rd 1 pt   $$ 
Let be $G_0=BH$, we do the following transformations
(illustrated in Fig. \ref{from-BH-to-G4}):

\begin{enumerate}
\item Within $N_{G_0}(u_1)$: $0$-additions of 
vertices $u'_{1}$ and $u''_{1}$ linked 
to $u_1,v_1,u_2,u_7,v_7$ and $u_1,v_1,u_4,u_5,v_5$
respectively. 
Note that $u'_{1} \dom u_1$ and $u''_{1} \dom u_1$. 
After that, since $N_{G_0+u'_{1}+u''_{1}}(u_1)$ is 
made of two $4$-wheels glued in vertex $v_1$, and 
thus is a $0$-dismantlable graph, it is possible 
to $1$-delete $u_1$. Let us note $G_1=G_0+u'_{1}+u''_{1}-u_1$.
\item Within $N_{G_1}(v_1)$: $0$-additions of 
vertices $v'_{1}$ and $v''_{1}$, 
linked to $v_1,u'_{1},u_2,v_2,v_7$  and $v_1,u''_{1},u_4,v_4,v_5$
respectively, 
and $1$-deletion of vertex $v_1$. Let us 
note $G_2=G_1+v'_1+v''_1-v_1$.
\item Within $N_{G_2}(w_6)$: $0$-additions of 
vertices $w'_{6}$ and $w''_{6}$,  
linked to $w_6,v_{6},v_5,w_5,w_4$ and $w_6,v_{6},w_2,w_7,v_7$
respectively, 
and $1$-deletion of vertex $w_6$. Let us 
note $G_3=G_2+w'_6+w''_6-w_6$.
\item Within $N_{G_3}(v_6)$: $0$-additions of 
vertices $v'_{6}$ and $v''_{6}$, 
linked to $v_6,w'_{6},v_5,v_4,w_4$ and $v_6,w''_{6},w_2,v_2,v_7$
respectively, 
and $1$-deletion of vertex $v_6$. Let us note $G_4=G_3+v'_6+v''_6-v_6$.
\smallbreak
 
Now, observe that $G_4 \in D_1$. 
Indeed (see Fig. \ref{G4-is-in-D1}):
\smallbreak
\item Vertices $v_5$ and $v_7$ are 
in $D_1(G_4)$ since their neighbourhoods 
are paths $w_1w_5w'_6v'_6v_4v''_1u''_1u_5u_6$ 
and $u_6u_7u'_1v'_1v_2v''_6w''_6w_7w_1$ respectively. 
Let us note $G_5=G_4-v_5-v_7$.
\item  The graph $G_5$ is a planar triangulated 
graph  which is $0$-dismantlable with  $22$ 
successive $0$-deletions.
\end{enumerate}
\noindent The switching property $(\dag)$ finishes the proof
and 
$W_{BH}=BH+u'_1+u''_1+v'_1+v''_1+w'_6+w''_6+v'_6+v''_6$.

\begin{figure}[h]
\begin{center}
\includegraphics[width=12cm]{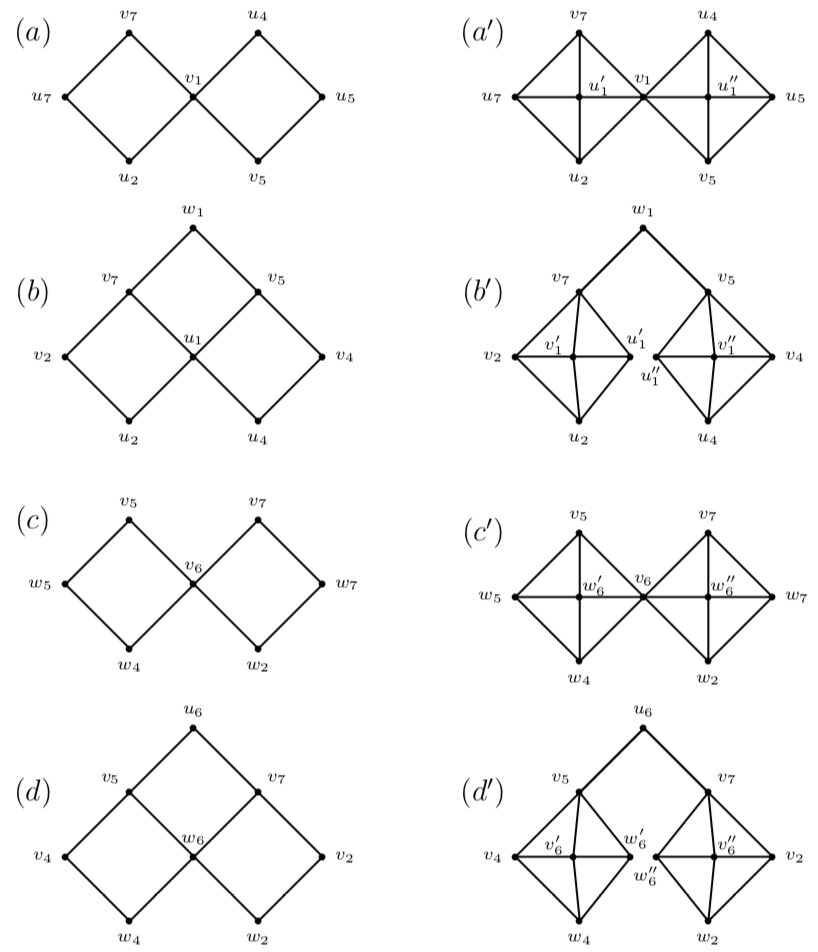}
\caption{
Eight 0-additions and four 1-deletions of vertices.
(left) Neighbourhoods in $BH=G_0$ of vertices  $u_1$ $(a)$, $v_1$ $(b)$, 
$w_6$ $(c)$ and $v_6$ $(d)$. 
(right) 
$(a') $ neighbourhood of vertex $u_1$ in $G_0+u'_{1}+u''_{1}$,
$(b') $ neighbourhood of vertex $v_1$ in $G_1+v'_{1}+v''_{1}$
with $G_1=G_0+u'_{1}+u''_{1}-u_1$,
$(c') $ neighbourhood of vertex $w_6$ in $G_2+w'_{6}+w''_{6}$
with $G_2=G_1+v'_{1}+v''_{1}-v_1$,
$(d') $ neighbourhood of vertex $v_6$ in 
$G_3+v'_{6}+v''_{6}$
with $G_3=G_2+w'_{6}+w''_{6}-w_6$
}
\label{from-BH-to-G4}
\end{center}
\end{figure}

\begin{figure}[h]
\begin{center}
\includegraphics[width=15cm]{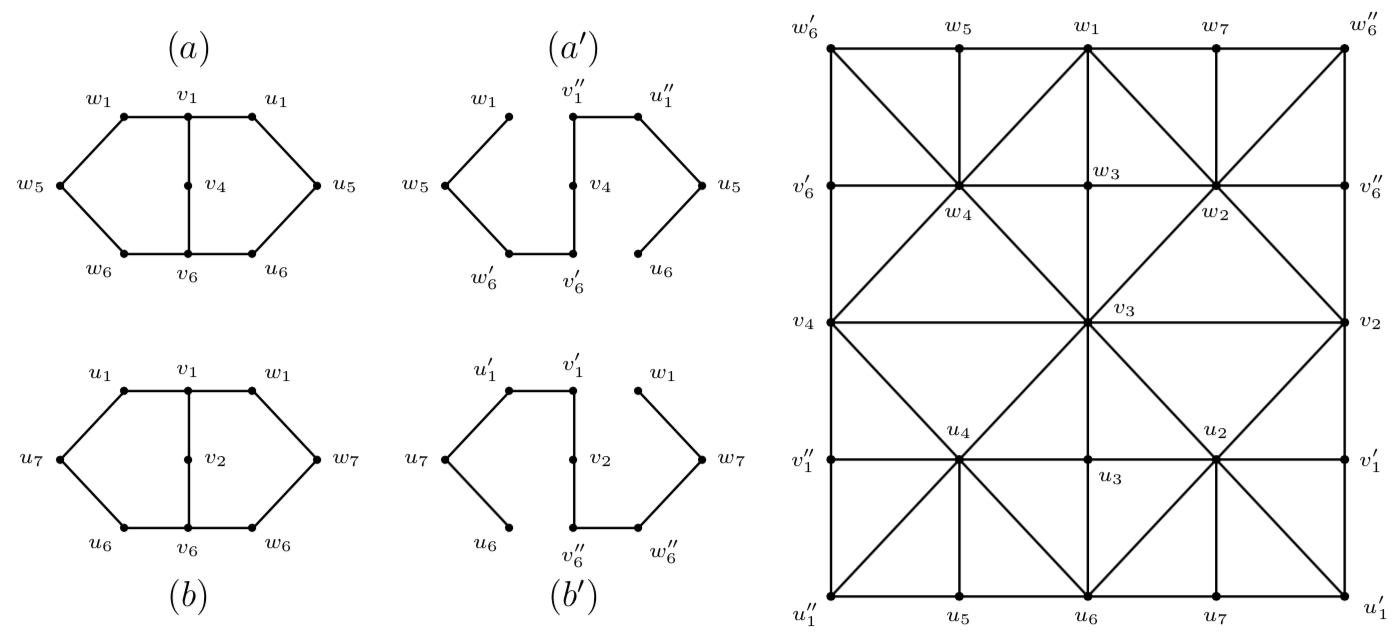}
\caption{
The graph $G_4$ obtained from $BH$ by 0-addition of vertices $u'_1$, $u''_1$, 
$v'_1$, $v''_1$, $w'_6$, $w''_6$, $v'_6$, $v''_6$ and 
1-deletion of vertices $u_1$, $v_1$, $w_6$, $v_6$
is 1-dismantlable.  
(left) Neighbourhoods in $BH$ of vertices  $v_5$ $(a)$ and $v_7$ $(b)$, 
(center) 
Neighbourhoods in $G_4$ of vertices  $v_5$ $(a')$ and $v_7$ $(b')$
(right) The graph $G_4-v_5-v_7$ is 0-dismantlable.
}
\label{G4-is-in-D1}
\end{center}
\end{figure}

\section{Acknowledgements}
We particularly thank one of the reviewer for 
his/her careful reading and extensive comments, 
which have helped to improve the paper, in particular the second part of section 4.1. 
The reviewer also suggested remark 16 and some improvements in the proof of theorem \ref{theo-dismantling-and-payan}.

{\small
  
}

\end{document}